\newcommand{\id}{{\rm id}}
\newcommand{\nonet}{{non\;et}}
\newcommand{\Jac}{{\rm Jac}}
\newcommand{\reg}{{\rm reg}}
\newcommand\sE{{\mathcal E}}
\newcommand\sO{{\mathcal O}}
\newcommand\bZ{{\mathbb Z}}
\newcommand\bC{{\mathbb C}}
\newcommand\bQ{{\mathbb Q}}
\newcommand\bP{{\mathbb P}}
\newcounter{lemma}
\newtheorem{lemma1}[lemma]{\setcounter{equation}{0}}
\newenvironment{lemma}{\begin{lemma1}{\bf Lemma.}}{\end{lemma1}}
\newenvironment{example}{\begin{lemma1}{\bf Example.}\rm}{\end{lemma1}}
\newenvironment{theorem}{\begin{lemma1}{\bf Theorem.}}{\end{lemma1}}
\newenvironment{proposition}{\begin{lemma1}{\bf Proposition.}}{\end{lemma1}}
\newenvironment{corollary}{\begin{lemma1}{\bf Corollary.}}{\end{lemma1}}
\newenvironment{remark}{\begin{lemma1}{\bf Remark.}\rm}{\end{lemma1}}
\newenvironment{Induction Step}{\begin{lemma1}{\bf Induction Step.}}{\end{lemma1}}
\newenvironment{Proof of Theorem 1.2}{\begin{lemma1}{\bf Proof of Theorem 1.2.}}{\end{lemma1}}
\def\srelbar{\vrule width0.6ex height0.65ex depth-0.55ex}
\def\merto{\mathrel{\srelbar\kern1.3pt\srelbar\kern1.3pt\srelbar
    \kern1.3pt\srelbar\kern-1ex\raise0.28ex\hbox{${\scriptscriptstyle>}$}}}
\def\build#1^#2_#3{\mathrel{\mathop{\null#1}\limits^{#2}_{#3}}}
\def\ssbt{\,{\scriptscriptstyle\bullet}\,}
\def\noop#1{}
\title {Compact Manifolds covered by a torus}
\author{Jean-Pierre Demailly, Jun-Muk Hwang and Thomas Peternell}
\address{\rm
Jean-Pierre Demailly, Institut Fourier, Universit\'e de Grenoble I,
France\vskip0pt
\emph{e-mail}\/: {\tt demailly@fourier.ujf-grenoble.fr}
\vskip2pt
Jun-Muk Hwang, KIAS, School of Mathematics, Seoul, Korea
\vskip0pt
work supported by the  SRC Program of Korea Science and
Engineering Foundation,
\vskip0pt
KOSEF grant funded by the Korea 
government (MOST) No.\ R11-2007-035-02001-0
\vskip0pt
\emph{e-mail}\/: {\tt jmhwang@kias.re.kr}
\vskip2pt
Thomas Peternell, Mathematisches Institut, Universit\"at Bayreuth,
Germany
\vskip0pt
\emph{e-mail}\/: {\tt thomas.peternell@uni-bayreuth.de}
}
\date{\today}
\begin{document}

\maketitle

\vskip.5cm
\emph{\hfill dedicated to Gennadi Henkin}
\vskip0.5cm
$~$
\begin{abstract}
Let $X$ be a compact complex manifold which is the image of a complex
torus by a holomorphic surjective map  $A \to X$. We prove that $X$ is
K\"ahler and that up to a finite \'etale cover, $X$ is a product of
projective spaces by a torus.
{\vskip4pt
\noindent
{\sc R\'esum\'e.} Soit $X$ une vari\'et\'e analytique complexe
compacte qui est l'image d'un tore complexe $A$ par une application
holomorphe surjective $A \to X$. Nous montrons que $X$ est
k\"ahl\'erienne et que modulo un rev\^etement \'etale fini, $X$ est un
produit d'espaces projectifs complexes par un tore.
\vskip4pt
\noindent
{\sc Zusammenfassung.} Sei $X$ eine kompakte komplexe
Mannigfaltigkeit, die Bild eines komplexen Torus $A$ unter einer
surjektiven holomorphen Abbildung $A \to X$ ist. Wir zeigen, dass $X$
eine K\"ahlermannigfaltigkeit ist und dass $X$ bis auf endliche
\'etale \"Uberlagerung das Produkt von projektiven R\"aumen und eines
Torus ist.}
\end{abstract}

$~$
\tableofcontents
\vskip.5cm

\noindent
{\sc Keywords.} Complex torus, abelian variety, projective
space, K\"ahler manifold, Albanese morphism, fundamental group,
\'etale cover, ramification divisor, nef divisor, nef tangent bundle,
anti-canonical line bundle, numerically flat vector bundle.
\vskip.5cm

\noop{\noindent
{\sc Mots-cl\'es.} Tore complexe, vari\'et\'e ab\'elienne, espace
projectif, vari\'et\'e k\"ahl\'erienne, morphisme d'Albanese, groupe
fondamental, rev\^etement \'etale, diviseur de ramification, diviseur
nef, fibr\'e tangent nef, fibr\'e anti-canonique, fibr\'e vectoriel
num\'eri\-quement plat.  \vskip.5cm}

\noindent
{\sc AMS Classification.} 14J40, 14C30, 32J25.
\vskip1cm

\section{Introduction}
Consider a compact complex manifold $X$ which is the image
of a complex torus $A$ by a surjective map
$$
f: A \to X.
$$
The purpose of this note is a classification of $X$ up to finite
\'etale cover. When $A$ is a simple abelian variety and $f$ is not an
isomorphism, Debarre [Deb89] has shown that $X \simeq \bP_n$. In the
case of a general abelian variety~$A$, Hwang-Mok [HM01] proved that
$X$ is a tower of bundles
$$
X=X_0\to X_1\to\ldots\to X_k=Y
$$
over a base  $Y$ which is an unramified quotient of an abelian variety,
the fibers of which are projective spaces $\bP_{n_j}$. Our result extends
this structure theorem to the
case of general (non necessarily projective) tori, and clarifies the
structure of the tower of projectives bundles by proving
that it is in fact a locally trivial bundle whose fibers are products
of projective spaces.

\begin{theorem} Let $X$ be a connected compact complex manifold which
is the image of a complex torus $A$ by a surjective
holomorphic map $ f: A \to X$.
Then
\begin{enumerate}
\smallskip
\item $X$ is K\"ahler.
\smallskip
\item There exists a finite \'etale cover $X'$ of $X$ such that
$$
X'\simeq \bP_{n_1}\times\ldots\times\bP_{n_k}\times B
$$
is a product of projective spaces with the Albanese torus
$B=A(X')$. Moreover $f$ admits a lifting $f':A'\to X'$
where $A'$ is an \'etale cover of $A$ and
$$
f':A'=A_1\times \ldots\times A_k\times B\to
\bP_{n_1}\times\ldots\times\bP_{n_k}\times B
$$
is given by a product of factors $f_{n_j}:A_j\to\bP_{n_j}$ and
$\id:B\to B$, for suitable abelian varieties $A_j$
and a $($non necessarily algebraic$)$ torus $B$.

\smallskip
\item $X$ is an \'etale quotient of the bundle $X'=
\bP_{n_1}\times\ldots\times\bP_{n_k}\times B\to B$, in other words
$X$ is a $\bP_{n_1}\times\ldots\times\bP_{n_k}$-bundle over an
\'etale quotient $M$ of the torus~$B$, such that the trivial
fibration $X'\to B$ is the pull-back of $X\to M$ by the
quotient map~$B\to M$.
\end{enumerate}
\end{theorem}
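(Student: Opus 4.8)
The plan is to route the classification through the Albanese map of $X$, exploiting the positivity that the domination $f\colon A\to X$ forces on $T_X$, and then to identify the fibers and trivialize the resulting bundle after a finite \'etale cover. First I would record that $T_X$ is nef. The differential $df\colon T_A\to f^*T_X$ is surjective over the dense open set $U\subseteq A$ on which $f$ is a submersion; since $T_A\cong\sO_A^{\oplus\dim A}$ is trivial, for any curve $C\subseteq X$ and any line-bundle quotient $T_X|_C\to Q\to 0$ one chooses a curve $\tilde C\subseteq A$ with $f(\tilde C)=C$ meeting $U$, so that $(f|_{\tilde C})^*Q$ is generically generated by global sections and hence $\deg Q\ge 0$. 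Thus $T_X$, and a fortiori $-K_X=\det T_X$, is nef. Since $X$ is the image of a compact K\"ahler manifold it lies in Fujiki's class $\mathcal C$, so the Albanese torus $B:=\mathrm{Alb}(X)$ has the expected dimension and, by functoriality, $q\colon A=\mathrm{Alb}(A)\to B$ is a surjective homomorphism of tori with the Albanese map $\alpha\colon X\to B$ satisfying $\alpha\circ f=q$ up to translation. In particular the general fiber $F=\alpha^{-1}(b)$ is the image of the translate $K_b=q^{-1}(b)$ of the connected subtorus $K=\ker q$.

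The heart of the argument is to show that $\alpha$ is a smooth submersion whose fibers are products of projective spaces. Here $-K_F=(-K_X)|_F$ is nef and $F$ is itself dominated by the torus $K_b$, so $F$ again falls under the hypotheses of the theorem but is now of Fano type; pulling back an ample class exhibits $F$ as dominated by an abelian variety, whence $F\cong\bP_{n_1}\times\cdots\times\bP_{n_k}$ by Debarre [Deb89] and Hwang--Mok [HM01]. Equidimensionality of $\alpha$, which promotes this into a genuine locally trivial $\bP_{n_1}\times\cdots\times\bP_{n_k}$-bundle, I would extract from the nefness of $T_X$ together with semicontinuity along the family $q$, the point being that a jump in fiber dimension would contradict the nef behaviour of $-K_X$ along curves.

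To finish, I would trivialize this bundle after a finite \'etale cover and then descend. The identity component of the structure group $\mathrm{Aut}(F)$ is $\prod\mathrm{PGL}_{n_j+1}$, and I would realize the bundle as the projectivization of a vector bundle $E$ on $B$ which is numerically flat, since $T_X$ and $-K_X$ are nef along the fibers. By the Demailly--Peternell--Schneider description, a numerically flat bundle on a torus is an iterated extension of Hermitian-flat bundles; the extra sections carried by $f$ force these flat pieces to be genuinely trivial rather than merely unitary, so that after a suitable isogeny $B'\to B$ the bundle becomes the product $\bP_{n_1}\times\cdots\times\bP_{n_k}\times B'$ and $f$ lifts to $f'=\bigl(\prod_j f_{n_j}\bigr)\times\id_B$ with surjections $f_{n_j}\colon A_j\to\bP_{n_j}$ from abelian varieties. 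This gives (2); then (1) is immediate, since $X'=\bP_{n_1}\times\cdots\times\bP_{n_k}\times B'$ is a product of K\"ahler manifolds and K\"ahlerness descends along the finite \'etale cover $X'\to X$. For (3), the finite Galois group $G$ of $X'/X$ acts on $B'=\mathrm{Alb}(X')$ with \'etale quotient $M=B'/G$, and on the fiber it acts through $\prod\mathrm{PGL}_{n_j+1}$ and the permutations of equal factors, so $X=X'/G\to M$ is the induced $\bP_{n_1}\times\cdots\times\bP_{n_k}$-bundle whose pull-back under $B'\to M$ is $X'\to B'$.

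The two places I expect the real difficulty are the following. First, there is a chicken-and-egg problem with K\"ahlerness: the most direct route to the smoothness of $\alpha$ and the structure of its fibers is the Demailly--Peternell--Schneider theory of nef tangent bundles, which presupposes exactly the K\"ahler property asserted in (1); so one must instead establish the submersion property and identify $F$ working in the possibly non-K\"ahler category, extracting everything from the torus domination before K\"ahlerness is available. Second, the trivialization of the numerically flat bundle is delicate, because a priori its flat part carries irrational unitary monodromy that no finite \'etale cover can remove; the crux is to use the global generation supplied by $f$ to force these flat summands to be trivial, and this rigidity is what makes the bundle an honest product after a finite cover rather than merely a flat projective bundle.
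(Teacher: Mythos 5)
Your overall skeleton (Albanese fibration, identify the fibers as products of projective spaces, trivialize after a finite \'etale cover, descend) does match the paper's strategy, but both of the steps you yourself single out as ``the real difficulty'' are genuine gaps, and your opening step is already the one the paper explicitly warns against. You begin by claiming $T_X$ is nef, lifting any curve $C\subseteq X$ to a curve $\tilde C\subseteq A$ meeting the submersion locus $U$. This fails precisely when $C$ lies in the image $f(R)$ of the ramification locus and every component of $f^{-1}(C)$ is contained in $R$: then no lift meets $U$, the image of $T_{A}|_{\tilde C}$ in $f^*T_{X}|_{\tilde C}$ is generically of deficient rank and may land in the kernel of the chosen quotient $Q$, and no degree bound results. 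The paper states exactly this (``we do not have control over the curves in this locus'') and never uses nefness of $T_X$: it uses only nefness of $T_{X}|_C$ for $C\not\subseteq f(R)$ (Prop.~4.1$\,$(4)) together with the much weaker but sufficient fact that every pseudo-effective divisor on $X$ is nef, because its pull-back to the torus is nef and nefness descends by [DPS94, 1.8] or [Pau98]; this gives $-K_X$ and $-K_{X/Y}$ nef (via [BDPP04]). Since your smoothness and equidimensionality of $\alpha$ are routed through nef $T_X$, they inherit the gap --- and unnecessarily so: $\alpha\circ f:A\to A(X)$ is a surjective map of tori, hence a submersion, so $d\alpha$ is surjective everywhere, and connectedness of fibers follows from Stein factorization (Prop.~4.1$\,$(7)). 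Likewise, citing [Deb89] and [HM01] to get $F\simeq\bP_{n_1}\times\cdots\times\bP_{n_k}$ directly is too quick: [HM01] produces only a tower of projective-space bundles, and collapsing the tower to a product is a genuine argument (Prop.~4.6$\,$(3) plus Lemma 5.2), which moreover first requires knowing $F$ is projective with finite $\pi_1$ --- obtained in the paper by passing to the cover of maximal irregularity $\tilde q$ and using [DPS94, 3.12] (Theorem 4.2, Theorem 4.4, Corollary 4.5).

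The two flagged difficulties are resolved in the paper by ideas absent from your proposal. For K\"ahlerness, the order is reversed: one first reduces to $f$ \emph{finite} (Section 2: every surjective map from a torus factors through a finite map $A/S\to X$ --- a reduction you never make), and then $T=f_*\omega$ is a K\"ahler current whose local potentials $\psi(x)=\sum\varphi_j(y_j)$, $f^{-1}(x)=\{y_j\}$, are continuous precisely because $f$ is finite; Richberg's regularization then yields a K\"ahler metric (Theorem 3.1, after Varouchas). This makes the K\"ahler-dependent machinery legitimately available from the start, instead of the unresolved circularity you acknowledge. For the trivialization, your reduction to $X=\bP(E)$ with $E$ numerically flat on $B$ is unjustified --- a $\bP_{n_1}\times\cdots\times\bP_{n_k}$-bundle need not be a projectivization of a vector bundle (the paper invokes Prop.~4.6$\,$(3) only when $X=\bP(E)$ is given, inside the finite-$\pi_1$ induction) --- and, as you concede, numerical flatness alone leaves possibly irrational unitary monodromy that no finite cover removes, with no actual mechanism supplied to exclude it. The paper's key device is instead the anti-canonical morphism: $-K_X$ is semi-ample (Prop.~6.1, via $\kappa(-K_X)=\nu(-K_X)$ computed on the torus, and [Kaw85], [Nak87]), its fibers meet each Albanese fiber $F$ in a finite set away from a proper subset $\Sigma$, and this finite multisection defines a monodromy homomorphism from $\pi_1(A(\tilde X))\simeq\bZ^{2q}$ to a \emph{finite} permutation group; the finite-index kernel $\Lambda$ furnishes the \'etale cover $X'$ on which the fibration is trivial. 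Your ``global generation supplied by $f$'' is, in substance, this semi-ampleness, but it must be implemented through the finite monodromy of the anti-canonical fibers, not through flat vector bundles on $B$.
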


We use in a very essential way results and methods borrowed from the
paper [DPS94] -- the main goal of which was to study the structure of
compact K\"ahler manifolds with nef tangent bundles -- although it is
unclear whether $X$ should a priori have a nef tangent bundle in the
present setting$\,$; certainly $T_X$ is nef on all curves not
contained in the ramification locus, but we do not have control over
the curves in this locus. On the other hand, one can pull-back
divisors to $A$ and obtain, as a simple consequence, that every
pseudo-effective divisor on $X$ must be nef. This is also a
significant property of manifolds with nef tangent bundles.

The fact that $X$ is K\"ahler is a direct consequence of the fact that any
surjective map $f:A\to X$ as above must be equidimensional -- this is
true even when $X$ is taken in the category of reduced complex spaces.
We then rely on an observation due to Varouchas [Var84], [Var89],
stating that the K\"ahler property is preserved by proper equidimensional
morphisms under a normality assumption. This is discussed in more detail
in the appendix, where we present a simple approach of this result.

\section{Elementary reductions}
\setcounter{lemma}{0}

First notice that every $n$-dimensional projective subvariety
$Y\subset \bP_N$ can be mapped by a finite morphism $Y\to\bP_n$ onto
projective space by taking a generic linear projection
$\bP_N\merto\bP_n$, and in particular $\bP_n$ can be obtained as a
finite surjective image of an abelian variety. A very elementary
construction consists in writing $\bP_n$ as the symmetric product
$(\bP_1)^{(n)}\,$; then by taking $2:1$ covers $E_j\to\bP_1$ by elliptic
curves, one realizes $\bP^n$ as the image of $E_1\times\ldots\times E_n$
by a $2^nn!:1$ finite (ramified) morphism. Therefore any product
$A_0\times \bP_{n_1}\times \ldots\times\bP_{n_k}$ of a  torus $A_0$ by
projective spaces can be written as a finite surjective image of an
abelian variety~$A$.

Conversely, let $f:A\to X$ be a surjective holomorphic map from a
complex torus onto a reduced complex space $X$ (we do not require $X$
to be smooth in this section).  Then the generic (smooth) fiber
$F_x=f^{-1}(x)$ has a trivial normal bundle, and therefore also a
trivial tangent bundle.  It follows that the connected components of
$F_x$ are translates of subtori. Since subtori form a discrete family,
by looking at the Stein factorization $A\to W\to X$ which has
precisely the connected components of the $F_x$ as fibers of $A\to W$,
we see that there is a subtorus $S\subset A$ and a factorization
$g:A/S\to X$ of $f$ such that $g$ is generically finite.

\begin{lemma} The quotient map $g:A/S\to X$ is finite.
\end{lemma}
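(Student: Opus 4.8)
The plan is to establish finiteness by proving that $g$ has \emph{no} positive-dimensional fiber; properness is then automatic, since $A/S$ is compact and a proper holomorphic map with finite fibers is finite. It is illuminating to first reformulate this as equidimensionality of $f$. Since $f$ factors as $f=g\circ q$ through the quotient homomorphism $q\colon A\to A/S$, whose fibres are the translates of $S$ and thus have pure dimension $s:=\dim S$, we have $f^{-1}(x)=q^{-1}(g^{-1}(x))$ and $\dim f^{-1}(x)=\dim g^{-1}(x)+s$ for every $x$. Hence $g$ is finite if and only if every fibre of $f$ has the generic dimension $s$. I now assume, for a contradiction, that some fibre $g^{-1}(x_0)$ has an irreducible component $C'$ with $e:=\dim C'\ge 1$.

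The first use of homogeneity is to show that $g$ cannot contract \emph{every} translate $C'+a$, $a\in A/S$. If it did, then for all $y$ and all $c,c'\in C'$ the points $y$ and $y+(c-c')$ would lie on the single contracted translate $C'+(y-c')$, so $g$ would be invariant under each difference $c-c'$, hence under the positive-dimensional subtorus $T_0\subseteq A/S$ they generate; then $g$ would factor through $(A/S)/T_0$, a torus of dimension $<\dim X$, contradicting surjectivity. Consequently the generic translate $C'+a$ is not contracted, while the translates cover $A/S$. Thus $C'$ is a member of a covering family of mutually homologous subvarieties whose generic member dominates an $e$-dimensional subvariety of $X$, and yet $C'$ itself is crushed to the point $x_0$.

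The contradiction I aim for comes from rational curves. Passing to the Stein factorization $g=\bar g\circ p$, the map $p\colon A/S\to V$ is proper and bimeromorphic -- its fibres are connected and are generically single points, because $g$ is generically finite -- while $\bar g\colon V\to X$ is finite and $V$ is normal. Since $p$ contracts $C'$, it has a positive-dimensional fibre and is not an isomorphism. A positive-dimensional fibre of a bimeromorphic morphism from a compact K\"ahler manifold, and $A/S$ being a torus is K\"ahler, is covered by rational curves. But a complex torus contains no rational curves: any holomorphic map $\bP_1\to A/S$ lifts through the universal covering $\bC^{\,d}\to A/S$ (with $d=\dim A/S$) to a holomorphic, hence constant, map $\bP_1\to\bC^{\,d}$. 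This forces every fibre of $p$ to be a point, so $p$ is proper, bimeromorphic and finite-fibred over the normal space $V$, and therefore an isomorphism by the analytic form of Zariski's main theorem; hence $g=\bar g$ is finite.

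The main obstacle is exactly the uniruledness input of the previous paragraph, because we are not in the algebraic category: neither $A$ nor $X$ is assumed projective, and crucially we may \emph{not} invoke the K\"ahler property of $X$, which in this paper is obtained only afterwards, as a consequence of the very equidimensionality being proved here. Moreover $K_{A/S}$ is trivial, so the rational curves in the fibres cannot be produced by a naive $K$-negative bend-and-break argument; they must be extracted from the K\"ahler geometry of the \emph{source} torus alone. A more intrinsic alternative would be to read the translation-invariant covering family $\{C'+a\}$ as a movable class on the K\"ahler manifold $A/S$ and to argue that a movable subvariety cannot sit in the exceptional locus of a generically finite morphism; but this reformulation needs the same non-algebraic positivity, and controlling it is the heart of the matter.
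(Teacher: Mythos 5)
Your reduction to the non-existence of positive-dimensional fibres and your Stein factorization $g=\bar g\circ p$ are fine, but the decisive step -- ``a positive-dimensional fibre of a bimeromorphic morphism from a compact K\"ahler manifold is covered by rational curves'' -- is not a usable lemma here: in the analytic category, as stated, it is simply \emph{false}. Embed an elliptic curve $E$ in a smooth projective (hence K\"ahler) threefold with negative normal bundle, e.g.\ as the zero section of the fibrewise compactification of the total space of $L_1^{-1}\oplus L_2^{-1}$ with $L_1,L_2$ ample on $E$; by Grauert's contraction criterion, $E$ can be blown down \emph{analytically} to a point, yielding a proper bimeromorphic morphism onto a normal compact complex space whose unique positive-dimensional fibre is $E$ -- no rational curves anywhere. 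The algebraic theorem on uniruledness of fibres of birational morphisms genuinely uses that the target is algebraic, whereas your $V$ is merely a normal compact complex space (and nothing in the setting makes it Moishezon at this stage of the paper, where even the K\"ahlerness of $X$ is still to be proved). You flag this obstacle yourself in your final paragraph, so by your own account the proof is incomplete; and since the lemma you would need is false in the stated generality, the gap cannot be closed by a better reference -- torus-specific input is unavoidable, which is precisely what your rational-curves route avoids.

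The paper's own proof is elementary and uses the torus structure in the opposite direction from your second paragraph: instead of showing that the generic translate $C'+a$ is \emph{not} contracted, it shows that every \emph{small} translate \emph{is} contracted. If $Y$ is a positive-dimensional component of $g^{-1}(x)$, pick a small Stein neighborhood $V$ of $x$; for $\delta$ small, $g(Y+\delta)$ is a compact irreducible analytic subset of the Stein space $V$, hence a single point. Thus all fibres of $g$ through points near $Y$ are positive-dimensional, contradicting the generic finiteness of $g$ built into the Stein-factorization definition of $S$. Your translation-invariance argument could actually be recycled to give an alternative ending: the set of $a$ with $C'+a$ contracted is nonempty (the Stein argument), open (re-apply it at the image point) and closed (a limit of contracted translates is contracted), so \emph{all} translates are contracted, and then your subtorus argument contradicts surjectivity. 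Either way, the Stein-neighborhood observation -- not rational curves -- is the missing key.
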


\begin{proof} Otherwise, there would exist a positive dimensional component
$Y$ in some fiber $g^{-1}(x)\subset A/S$. Then a small translate $Y+\delta$
would map to a compact irreducible subset contained in a small Stein
neighborhood $V$ of $x$, and the image $g(Y+\delta)$ would be a single
point $y\in V$. This would imply that the generic fiber of
$g:A/S\to X$ of $g$ is positive dimensional, a contradiction.
\end{proof}

\begin{corollary} Every surjective holomorphic map $f:A\to X$ from a torus
$A$ onto a complex space $X$ factorizes as a finite map
$g:A/S\to X$ with respect to some subtorus $S$ of $A$.
\end{corollary}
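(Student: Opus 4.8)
The plan is to read off the Corollary as a direct packaging of two facts that are already available: the structural analysis of the generic fibre of $f$ carried out just before the Lemma, which manufactures a subtorus $S\subset A$ together with a \emph{generically} finite factorization $g\colon A/S\to X$ of $f$, and the Lemma itself, which promotes "generically finite" to "finite." So the genuine mathematical content lies upstream, and the Corollary is the act of recording that upstream output in its final form.

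First I would recall why the subtorus $S$ exists, since this is the one non-formal ingredient. Over a point $x$ in the locus where $f$ is a submersion onto the smooth part of $X$, the fibre $F_x=f^{-1}(x)$ is a compact complex submanifold of $A$ whose normal bundle $N_{F_x/A}$ is the pull-back of the (constant) tangent space $T_{X,x}$, hence trivial. Because $A$ is a torus, $T_A|_{F_x}$ is trivial as well, so from
$$
0\to T_{F_x}\to T_A|_{F_x}\to N_{F_x/A}\to 0
$$
one gets that $T_{F_x}$ is trivial, and therefore each connected component of $F_x$ is a translate of a subtorus of $A$. The decisive point is that subtori of $A$ form a discrete family (they correspond to saturated sublattices of the period lattice), so the subtorus direction cannot vary continuously with $x$: there is a single fixed $S$ such that every component of a generic fibre is an $S$-translate. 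Passing to the Stein factorization $A\to W\to X$, whose fibres over $W$ are exactly these connected components, identifies $W$ with $A/S$ and presents $f$ as $A\to A/S\xrightarrow{\,g\,}X$ with $g$ generically finite.

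Finally I would simply invoke the Lemma: since $g\colon A/S\to X$ is generically finite, the Lemma rules out any positive-dimensional component in a fibre $g^{-1}(x)$, so $g$ is finite, which is precisely the assertion of the Corollary. The main obstacle is entirely the discreteness-of-subtori step that lets one choose $S$ uniformly over the generic locus and thereby obtain an honest factorization through the quotient torus $A/S$ rather than through some merely birational model; once that is in place — as it is in the discussion preceding the Lemma — the Corollary follows formally, the only remaining check being that the Stein target is genuinely $A/S$ (and not a proper quotient), which holds because $g$ is already generically finite and $A/S$ is normal.
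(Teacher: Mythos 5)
Your proposal is correct and follows exactly the paper's route: the Corollary is indeed just the discussion preceding Lemma 2.1 (trivial normal bundle of the generic fibre, hence trivial tangent bundle, components are translates of a fixed subtorus $S$ by discreteness of subtori, and Stein factorization identifying $W$ with $A/S$ so that $g:A/S\to X$ is generically finite) combined with the Lemma, which promotes generically finite to finite. Your fleshing-out of the tangent-bundle exact sequence and the uniformity of $S$ is a faithful expansion of the paper's terser phrasing, with no deviation in method.
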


As a consequence, we may assume from the very beginning that the surjective
map $f:A\to X$ is finite.

\section{The K\"ahler property}
\setcounter{lemma}{0}

In order to conclude that our complex manifold (or complex space) $X$
is K\"ahler, we rely on the
following general statement. Recall that a complex space $Z$ is said to
be K\"ahler if there exists a (smooth) K\"ahler metric $\omega$ on the
regular part $Z_\reg$, which can be extended locally as a K\"ahler
metric in a smooth ambient space, near every singular point of~$Z$.

\begin{theorem} Let $f:Y \to X $ be a proper and surjective holomorphic
map between complex spaces of pure dimensions. Assume that $Y$ is
K\"ahler, that $f$ has equidimensional fibers and that $X$ is reduced
and normal. Then $X$ is K\"ahler.
\end{theorem}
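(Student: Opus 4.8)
The plan is to descend a K\"ahler form from $Y$ to $X$ by fibre integration, and then to upgrade the resulting closed positive current to a genuine smooth K\"ahler metric. Write $d=\dim Y-\dim X$ for the relative dimension, which is constant by the pure-dimensionality and equidimensionality hypotheses, and fix a K\"ahler form $\omega$ on $Y$. First I would form the push-forward $T:=f_*(\omega^{d+1})$. Since $f$ is proper and every fibre has the expected dimension $d$, fibre integration of the smooth $(d+1,d+1)$-form $\omega^{d+1}$ is well defined and produces a $(1,1)$-current $T$ on $X$; it is closed because fibre integration commutes with $d$, and positive because $\omega$ is. On the Zariski-open set $X_0:=X_{\reg}\setminus\Delta$, where $\Delta$ denotes the discriminant (the image of the non-submersion locus together with the singular fibres), the map $f$ is a proper submersion with smooth $d$-dimensional fibres, and there $T$ is represented by the smooth form $x\mapsto\int_{f^{-1}(x)}\omega^{d+1}$, which is strictly positive since the horizontal lift of a nonzero tangent vector is not fibre-tangent. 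Thus $T$ is already a bona fide K\"ahler metric on the dense open set $X_0$, and the whole difficulty is concentrated on the analytic subset $Z:=X\setminus X_0$.

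Next I would produce local smooth strictly plurisubharmonic potentials. Cover $X$ by Stein opens $U_\alpha$, each embedded in some $\bC^{N_\alpha}$. On $U_\alpha$ the closed positive current $T$ admits a continuous plurisubharmonic potential $u_\alpha$ with $T=dd^c u_\alpha$, continuity arising because $u_\alpha$ is locally a fibre integral of a continuous integrand over the compact fibres and hence varies continuously with the base point. Using the Fornaess--Narasimhan extension of plurisubharmonic functions across an analytic subset, I extend $u_\alpha$ to a continuous plurisubharmonic function on the ambient $U_\alpha\subset\bC^{N_\alpha}$; adding $\varepsilon_\alpha|z|^2$ makes it strictly plurisubharmonic, and Richberg's regularization theorem then yields a smooth strictly plurisubharmonic $v_\alpha$ on a slightly smaller chart, equal to the original where the latter is already smooth, namely over $U_\alpha\cap X_0$. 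Each $dd^c v_\alpha$ is then a local smooth K\"ahler metric on $X\cap U_\alpha$ that extends to the ambient chart, which is exactly the local datum required by the definition of a K\"ahler space.

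The heart of the argument, and the step I expect to be the main obstacle, is to glue these local metrics into a global one. Here normality enters: two potentials of the same current differ by a pluriharmonic function, so the transition terms $h_{\alpha\beta}:=u_\alpha-u_\beta$ are pluriharmonic, hence locally real parts of holomorphic functions, and by normality they are smooth on $X_{\reg}$ and extend across $\mathrm{Sing}(X)$. One is therefore faced with smoothing and strictly-positivizing the $u_\alpha$ \emph{compatibly}, so that the differences of the regularized potentials stay pluriharmonic and the forms $dd^c v_\alpha$ patch to a single global smooth K\"ahler form. The naive construction $\Phi=\sum_\alpha\theta_\alpha u_\alpha$ fails by itself, because it introduces a curvature error $dd^c\Phi-T$ concentrated near $Z$, where $T$ carries no strict positivity to absorb it and where, $X$ being compact, no global strictly plurisubharmonic function is available to help. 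The resolution, which is the technical core of Varouchas' method, is to glue the $v_\alpha$ by regularized maxima over a suitably refined cover: since competing potentials differ by the smooth pluriharmonic $h_{\alpha\beta}$, a regularized maximum of strictly plurisubharmonic functions stays strictly plurisubharmonic while respecting the pluriharmonic transition structure, yielding local potentials whose $dd^c$ glue to a smooth K\"ahler form on $X$. Checking that the additive constants and the order in which the charts are patched can be arranged so that on each region the intended potential genuinely dominates is the delicate point; this combinatorial-analytic bookkeeping, rather than the formal fibre-integration step, is where the real work lies.
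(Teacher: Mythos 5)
There is a genuine gap, and it sits precisely at the point you dispatch in one sentence: the existence and continuity of the local potentials $u_\alpha$ of $T=f_*(\omega^{d+1})$ near the bad set $Z$. Your justification --- that $u_\alpha$ ``is locally a fibre integral of a continuous integrand over the compact fibres'' --- is incorrect: a potential of $T$ is not a fibre integral of $\omega^{d+1}$; it is obtained by integrating against the logarithmic kernel, on a chart
$$
u(z)=\int_{t\in Y}\chi(f(t))\,\omega(t)^{d+1}\wedge\log|z-f(t)|\wedge
\big(dd^c\log|z-f(t)|\big)^{n-1},
$$
and this integrand has logarithmic poles along $\{f(t)=z\}$. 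Proving that $u$ is continuous requires uniform mass bounds and uniform integrability for wedge products $v_1\,dd^cv_2\wedge\ldots\wedge dd^cv_\ell$ of plurisubharmonic functions whose pole sets have codimension $n$ on $Y$ --- which is exactly where the equidimensionality hypothesis is consumed, and which is the analytic core of the proof (Lemmas 8.1 and 8.2 of the paper in the smooth and mildly singular cases; for general normal $X$ it is the long technical work of Varouchas [Var89], resting on Barlet's cycle-space theory, which the paper explicitly declines to reproduce). Moreover, on a singular $X$ even the \emph{existence} of a local potential of a closed positive $(1,1)$-current is not automatic; the paper secures it, under its mild-singularity hypothesis, by descending from a smooth finite cover via $T=\frac{1}{\delta}v_*v^*T$. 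A decisive sanity check: your continuity argument nowhere uses normality (you invoke it only for the pluriharmonic transitions), so it would apply verbatim to the quotient of $\bP^n$ obtained by identifying a line with a conic --- a finite, hence equidimensional, image of a K\"ahler manifold which is \emph{not} K\"ahler (Remark 8.3 of the paper); there the potential of $f_*\omega$ exists only off the identified curve and does not extend continuously. Normality must therefore enter in the continuity step itself, not merely in the gluing.

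Conversely, you overestimate the step you call the heart of the argument. First, $T$ is a K\"ahler current \emph{globally}, not only on the submersion locus $X_0$: on any compact $K$ one has $f^*\alpha\wedge\omega^{d}\le C_K\,\omega^{d+1}$, whence $T=f_*(\omega^{d+1})\ge C_K^{-1}(f_*\omega^{d})\,\alpha$ with $f_*\omega^{d}$ a positive constant on each component, so strict positivity is available everywhere, including near $Z$. Second, once the local potentials are known to be continuous and strictly plurisubharmonic, Richberg's theorem [Ric68] regularizes them directly into a smooth K\"ahler metric --- the transitions are pluriharmonic, hence (by normality) smooth, and the regularized-maximum bookkeeping you describe is the internal mechanism of Richberg's theorem rather than an additional obstacle; this is exactly how the paper concludes. (Incidentally, the ambient extension of continuous strictly plurisubharmonic functions from an analytic subset is also due to Richberg, not to Fornaess--Narasimhan, though this is a minor point.) In sum, your fibre-integration-plus-Richberg framework coincides with the paper's, but the proposal replaces the theorem's hardest step --- continuity of the potential across the discriminant and singular loci, where both normality and equidimensionality do their real work --- with a one-line claim that is false as stated.
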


\begin{proof} When $X$ and $Y$ are manifolds, the result is proved in
J.~Varouchas' PhD Thesis [Var84] -- this is in fact sufficient for our 
purposes, and in fact we even only need the case when $f$ is finite$\,$; 
the general case of complex spaces with $X$ normal is due to J.~Varouchas 
[Var89]. 
Since the proofs given by Varouchas are long and technically quite hard,
we outline here a very short proof for the case of finite maps between
manifolds (and refer the reader to the appendix for a more general
situation where the proof remains quite easy).
Let $\omega$ be a K\"ahler metric on $Y$ and $\alpha$ a (smooth)
positive definite hermitian metric on $X$. Then
for every compact set $K\subset X$ we have $f^*\alpha\le C\omega$ on
$f^{-1}(K)$ for some constant $C_K>0$, hence
$$
f_*\omega \ge C_K^{-1}f_*f^*\alpha \ge C_K^{-1}(\deg f)\,\alpha\quad
\hbox{on $K$}.
$$
This implies that $T=f_*\omega$ is a K\"ahler current (see e.g.\
[DP04, 0.5]), i.e.\ that it is bounded below by a smooth positive
definite form. However, a local potential of $T$ can be written as
$\psi(x)=\sum \varphi_j(y_j)$ where $f^{-1}(x)=\{y_j\}$ and
$\varphi_j$ is a local potential of $\omega$ near $y_j$. This implies
immediately that $\psi$ is continuous. In fact the problem occurs only in a
neighborhood of ramification points, and there the continuity follows
immediately from the fact that $f$ is finite. Therefore $\psi$ is
a continuous strictly plurisubharmonic function. Then it follows by
using Richberg's result [Ric68] (see also [Dem82], [Dem92]) that $\psi$ and $T$
can be regularized so as to produce a K\"ahler metric on~$X$.
\end{proof}

For people only interested in pure algebraic geometry, we present
a simpler statement which suffices in the algebraic context.

\begin{theorem} Let $f: A \to X$ be a finite surjective map from an abelian variety to a complex manifold $X.$ Then $X$ is projective.
\end{theorem}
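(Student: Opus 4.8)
The strategy I would follow is to establish separately that $X$ is Moishezon and that $X$ is K\"ahler, and then to invoke Moishezon's theorem that a compact complex manifold which is simultaneously Moishezon and K\"ahler is projective.

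For the Moishezon property I would work at the level of meromorphic function fields. Since $f\colon A\to X$ is finite and surjective, the pull-back $f^{*}\colon\bC(X)\hookrightarrow\bC(A)$ is injective, and every meromorphic function $h$ on $A$ is algebraic of degree at most $\deg f$ over $f^{*}\bC(X)$: along a general fiber $\{y_{1},\dots,y_{d}\}$ of $f$ the elementary symmetric functions of the values $h(y_{i})$ descend to meromorphic functions on $X$, so that $h$ satisfies a monic polynomial of degree $d=\deg f$ with coefficients in $f^{*}\bC(X)$. Hence $\bC(A)$ is a finite algebraic extension of $f^{*}\bC(X)$, and therefore $\mathrm{tr.deg}_{\bC}\bC(X)=\mathrm{tr.deg}_{\bC}\bC(A)=\dim A=\dim X$, the last equalities holding because the abelian variety $A$ already satisfies $a(A)=\dim A$. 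Thus the algebraic dimension of $X$ equals $\dim X$, i.e. $X$ is Moishezon.

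The K\"ahler property is then immediate from the preceding theorem: the abelian variety $A$ is K\"ahler, the finite map $f$ is proper with zero-dimensional (hence equidimensional) fibers, and $X$, being a manifold, is reduced and normal; so that theorem produces a K\"ahler metric on $X$. Combining the two properties, Moishezon's theorem yields that $X$ is projective, which is the desired conclusion.

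The only genuinely delicate point I anticipate is the Moishezon step, and more precisely the verification that the extension $\bC(A)/f^{*}\bC(X)$ is actually \emph{finite} rather than merely algebraic: one must use the finiteness of $f$ (hence properness with finite fibers) both to bound the number of sheets and to check that the symmetric functions of $h$ along the fibers are themselves meromorphic on $X$. Once algebraicity is secured in this way, the role of the preceding theorem is precisely to supply the analytic input --- a K\"ahler metric --- that promotes the Moishezon manifold $X$ to a projective one. It is worth noting that no direct construction of an ample line bundle on $X$ seems to be available, which is fortunate: the naive candidates, such as the determinant of $f_{*}\sO_A$ or of $f_{*}L$ for an ample $L$ on $A$, need not be positive, so the round-trip through Moishezon plus K\"ahler appears to be the efficient route.
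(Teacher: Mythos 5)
Your proof is correct, but it takes a genuinely different route from the paper's. Your three steps all check out: for a finite surjective map of irreducible normal spaces the symmetric-function trick indeed shows $\bC(A)$ is algebraic of degree at most $\deg f$ over $f^*\bC(X)$ (and algebraicity alone already preserves transcendence degree, so the finiteness worry you flag is not even needed for $a(X)=\dim X$); Theorem 3.1 applies since a finite map is proper with equidimensional fibers and a manifold is normal; and Moishezon plus K\"ahler implies projective by [Moi67]. However, the paper's proof deliberately avoids exactly the ingredient you rely on: Theorem 3.2 is presented as the statement ``for people only interested in pure algebraic geometry,'' so its proof is designed to bypass the analytic Theorem 3.1 (Varouchas-type direct image currents, Richberg regularization). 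Instead, the paper constructs an ample line bundle on $X$ directly: $X$ is Moishezon, so take a projective model $\pi:\hat X\to X$, an ample $\hat L$ on $\hat X$, and set $L=(\pi_*\hat L)^{**}$, which is big on $X$; then $f^*L$ is a big line bundle on the abelian variety $A$, hence \emph{ample} by [BL04, Proposition 4.5.2], and since $f$ is finite, ampleness of $f^*L$ forces $L$ itself to be ample. This shows your closing remark --- that no direct construction of an ample bundle seems available --- is mistaken: the key fact you were missing is that big line bundles on abelian varieties are automatically ample, which upgrades the bigness of $L$ (free on any Moishezon manifold) to ampleness via pull-back along the finite map; the naive candidates you considered ($\det f_*\sO_A$, $\det f_*L$) are indeed the wrong objects, but the reflexivized pushforward from a projective model works. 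In comparison, your route buys independence from the theory of line bundles on abelian varieties, at the cost of importing the heavy K\"ahler machinery that this particular theorem was formulated to avoid.
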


\begin{proof} It immediately follows from the assumption that $X$ is
Moishezon. Let $$ \pi: \hat X \to X$$
a bimeromorphic holomorphic map from a projective manifold $\hat X.$ Choose an ample line bundle $\hat L$ on $\hat X$ and set
$$ L = (\pi_*(\hat L))^{**}.$$
Then $L$ is a big line bundle on $X$. Hence $f^*(L)$ is a big line
bundle on $A$ and therefore ample by [BL04, Proposition 4.5.2].
Hence $L$ is ample.
\end{proof}

\section{Preliminary structure results}
\setcounter{lemma}{0}

Let $X$ be a compact manifold covered by a torus, and let $n = \dim X$
be its dimension. By the above, $X$ is
K\"ahler and possesses a finite ramified covering $f: A \to X $ by a
torus. Let us denote by $R=\{\Jac(f)=0\}$ the ramification divisor
of $f$ in $A$, so that
$$
K_A=0=f^*K_X+R.
$$
We also consider the irregularity $q(X)=h^0(X,\Omega^1_X)$ of $X$ and
the Albanese map
$$
\alpha: X \to A(X),\qquad\dim A(X)=q(X),
$$
and define
$$
\tilde q(X) = \max q(\tilde X),
$$
where $\tilde X \to X$ runs over all finite \'etale covers $\tilde X \to X$,
to be the maximal irregularity of these covers$\,$; the
following result shows in particular that $\tilde q(X)$ is always finite in
the present context.

\begin{proposition}  \begin{enumerate}
\item Every pseudo-effective divisor on $X$ is nef. In particular $-K_X$ is nef.
\smallskip
\item Every big divisor on $X$ is ample.
\smallskip
\item $X$ is Fano iff $-K_X$ is big iff $(-K_X)^n > 0$ iff $R$ is ample.
\smallskip
\item $T_{X\vert C}$ is nef for all curves not contained in the image
$f(R)\subset X$ of the ramification locus.
\smallskip
\item $\pi_1(X)$ is almost abelian, i.e.\ abelian up to a subgroup of
finite index.
\smallskip
\item The Albanese map $\alpha: X \to A(X)$ is surjective.
\smallskip
\item The Albanese map is in fact a submersion with connected fibers.
\end{enumerate}
\end{proposition}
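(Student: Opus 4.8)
The plan is to exploit the auxiliary map $\phi = \alpha\circ f\colon A\to A(X)$ obtained by composing the finite covering with the Albanese map. Since $f$ is surjective and $\alpha$ is surjective by part~(6), the map $\phi$ is a surjective holomorphic map between the complex tori $A$ and $A(X)$. Any holomorphic map between complex tori is affine, i.e.\ a group homomorphism followed by a translation; hence $d\phi$ is constant in the natural trivialisations of the tangent bundles and coincides with the linear part of $\phi$, which is surjective because $\phi$ is. Thus $\phi$ is a submersion of rank $q:=\dim A(X)$ at every point of $A$.

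First I would deduce that $\alpha$ itself is a submersion, with no information about the ramification divisor required. Fix $x\in X$ and choose $a\in f^{-1}(x)$, which is possible since $f$ is onto. Differentiating $\phi=\alpha\circ f$ gives $d\phi_a = d\alpha_x\circ df_a$, and since the rank of a composite cannot exceed the rank of its left factor we get $q=\rk(d\phi_a)\le \rk(d\alpha_x)$. As the target $A(X)$ has dimension $q$, also $\rk(d\alpha_x)\le q$, so $\rk(d\alpha_x)=q$ and $d\alpha_x$ is surjective. Because $x$ is arbitrary and $f$ is surjective, $\alpha$ is a submersion everywhere. This is precisely the point that eludes the nef tangent bundle approach of [DPS94], where the curves in the ramification locus are out of control; here the torus covering renders it trivial.

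Next comes connectedness of the fibres. Being a proper submersion, $\alpha$ is smooth (flat with smooth fibres) and, by Ehresmann's theorem, a topological fibre bundle; in particular the number $d$ of connected components of a fibre is constant over the connected base $A(X)$. Consider the Stein factorisation $X\xrightarrow{\ \alpha'\ }Z\xrightarrow{\ g\ }A(X)$, with $\alpha'$ proper with connected fibres and $g$ finite. Since the fibres $X_y$ are smooth, hence reduced, one has $\dim H^0(X_y,\sO_{X_y})=d$ constantly, so by base change $\alpha_*\sO_X$ is locally free of rank $d$ and $g$ is finite flat of degree $d$ whose fibres are $d$ reduced points; therefore $g$ is finite étale. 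Consequently $Z$ is again a complex torus, connected because $X$ is.

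Finally I would close the loop with the universal property of the Albanese torus. As $\alpha'\colon X\to Z$ is a holomorphic map to the torus $Z$, it factors as $\alpha'=h\circ\alpha$ for an affine map $h\colon A(X)\to Z$. Together with $g\circ\alpha'=\alpha$ this gives $g\circ h\circ\alpha=\alpha$ and $h\circ g\circ\alpha'=h\circ\alpha=\alpha'$; since $\alpha$ and $\alpha'$ are dominant we conclude $g\circ h=\id$ and $h\circ g=\id$, so $g$ is an isomorphism, $d=1$, and the fibres of $\alpha$ are connected. The only genuinely delicate step is the identification of the finite part $g$ of the Stein factorisation as an étale cover of the torus, which is where the smoothness of $\alpha$ and the base-change/Ehresmann input are needed; everything else is formal once $\phi$ is recognised as a submersion of tori.
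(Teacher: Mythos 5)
You have a genuine gap of scope: the proposition asserts seven things, and your proposal proves only item (7), while explicitly invoking item (6) (``$\alpha$ is surjective by part~(6)'') without proving it --- so even the part you treat rests on an unestablished input. Items (1)--(5) are nowhere addressed and do not follow from your torus-submersion argument: the paper proves (1) by pulling a pseudo-effective $L$ back to $A$, where it is nef, and descending nefness via [DPS94, 1.8] or [Pau98]; (2) by noting a big $L$ makes $X$ Moishezon, hence projective by [Moi67], and writing $L$ as ample plus effective with the effective part nef by (1); (4) from the sheaf inclusion $\sO(T_A)\subset f^*\sO(T_X)$ with torsion cokernel off $f(R)$; and (5) from the finite-index image of $\pi_1(A)\to\pi_1(X)$. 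For (6) the paper argues that a non-surjective Albanese image would admit a quotient of general type by a subtorus (Ueno), contradicting the existence of a surjection from the torus $A$; alternatively, within your own framework you could observe that $\alpha(X)=\phi(A)$ is a translate of a subtorus and that the Albanese image generates $A(X)$, but you must supply some such argument rather than cite the conclusion.

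What you do prove, you prove correctly, and for the submersion half of (7) your route is exactly the paper's: $\phi=\alpha\circ f$ is a surjective map of tori, hence affine with everywhere-surjective differential, and $\rk(d\phi_a)\le\rk(d\alpha_x)$ forces $d\alpha$ surjective at every point. For connectedness of the fibers you genuinely diverge: the paper shows the finite Stein factor $W\to A(X)$ is unramified because a ramified finite cover of a torus would have positive Kodaira dimension, yet $W$ is an image of a torus; you instead extract \'etaleness from the smoothness of $\alpha$, via constancy of $\dim H^0(X_y,\sO_{X_y})$, local freeness of $\alpha_*\sO_X$, and reducedness of the fibers of $g$. Both are valid; your base-change argument needs the submersion property (which you establish first), while the paper's Kodaira-dimension argument is shorter and independent of flatness considerations. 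The concluding identification of the Stein factor with $A(X)$ by the universal property of the Albanese map is common to both, and your explicit dominance cancellation ($g\circ h=\id$, $h\circ g=\id$) is a correct way of making the paper's one-line appeal to universality precise.
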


\begin{proof} (1) If $L$ is pseudo-effective, so is $f^*(L)$.  Hence
$f^*(L)$ (which is a line bundle on a torus) is nef, and therefore
so is $L$ by [DPS94, 1.8] or [Pau98, Th.~1]. This applies in
particular to $-K_X$, since $f^*(-K_X) = R$ is nef.
\vskip .2cm  \noindent (2) If $L$ is big, then $X$ is Moishezon and
therefore projective by [Moi67]. As it is well-known, $L$ is then
the sum of an effective $\bQ$-divisor and of an ample $\bQ$-divisor,
and the effective part is nef by (1), hence $L$ must be ample.
\vskip .2cm

\noindent
(3) follows immediately from (1) and (2).
\vskip .2cm

\noindent
(4) is a direct consequence of the sheaf inclusion $\sO(T_A) \subset
f^*\sO(T_X)$ and of the triviality of $T_A$. In fact, if $C$ is not
contained in $f(R)$, the cokernel of the restriction map
$\sO(T_{A|f^{-1}(C)}) \subset f^*\sO(T_{X|C})$ is a torsion sheaf
supported on the finite set~\hbox{$C\cap f(R)$}.\vskip .2cm

\noindent
(5) As $f$ is surjective, the induced morphism
$$ \pi_1(A) \to \pi_1(X)$$
of fundamental groups has an image of finite index in $\pi_1(X)$, bounded by the degree of $f$. \vskip .2cm

\noindent
(6) If $\alpha$ were not be surjective, then the image $\alpha(X)\subset A(X)$ would have a quotient $Y=\alpha(X)/B$ by a subtorus which is a variety of general type (Ueno  [Uen75]). Then there would exist a surjective morphism $A\to Y$
to a variety of general type, which is absurd: the pull-back of a pluricanonical section of $Y$ would yield a section of a certain tensor power $(\Omega^p_A)^{\otimes k}$ possessing zeros, contradicting the triviality of the bundle $\Omega^p_A$.\vskip .2cm

\noindent
(7) The composition
$$
A \build\to^{f}_{}X\build\to^{\alpha}_{}A(X)
$$
is a surjective map between tori, hence is a linear submersion. Its
differential is everywhere surjective, and therefore so is the
differential of $\alpha$. If $\alpha$ has disconnected fibers, we
consider its Stein factorization
$$X \to W \to A(X).$$
The finite map $W\to A(X) $ must be unramified, otherwise $W$  would have positive Kodaira dimension, which is impossible since it is an image of a torus. Therefore $W$ itself is a torus, and must be equal to $A(X)$ by the universality of
the Albanese map. This actually implies that the fibers of $\alpha$ are connected.
\end{proof}

\begin{theorem} Take a finite \'etale cover $\tilde X \to X$ such that
$q(\tilde X)=\tilde q(X)$. Then the Albanese map
$$
\alpha:\tilde X\to A(\tilde X)
$$
is a submersion with connected fibers $F$ which are covered by tori and have
finite fundamental group $\pi_1(F).$
\end{theorem}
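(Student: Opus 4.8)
The plan is to reduce everything to the Proposition of this section, applied not to $X$ but to $\tilde X$, and then to extract the finiteness of $\pi_1(F)$ from the maximality of the irregularity. First I would note that $\tilde X$ is itself covered by a torus: pulling back $f\colon A\to X$ along the finite \'etale map $\tilde X\to X$ and taking a connected component of $A\times_X\tilde X$ yields a finite surjective map $\tilde f\colon \tilde A\to\tilde X$ from a torus $\tilde A$ (a connected \'etale cover of a torus is again a torus). Hence the entire Proposition holds for $\tilde X$; in particular part (7) gives directly that $\alpha\colon\tilde X\to A(\tilde X)$ is a submersion with connected fibers, settling the first assertion.

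For the claim that the fibers are covered by tori I would use the factorization $\tilde A\build\to^{\tilde f}_{}\tilde X\build\to^{\alpha}_{}A(\tilde X)$. As in the proof of part (7), the composite is a surjective holomorphic map of tori, hence up to translation a surjective homomorphism; its fibers $\tilde A_b$ are finite disjoint unions of translates of the identity component $S$ of its kernel, a subtorus of dimension $\dim S=n-q$ with $q=\tilde q(X)$. Since $F=\alpha^{-1}(b)$ is an irreducible complex manifold of the same dimension $n-q$ and $\tilde f$ is finite, the image $\tilde f(s_i+S)$ of any one component is an irreducible analytic subset of full dimension, hence all of $F$. Thus $\tilde f$ restricts to a finite surjection from the torus $s_i+S$ onto $F$.

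The substantive point is the finiteness of $\pi_1(F)$. As $\tilde X$ is compact, $\alpha$ is a proper submersion, so by Ehresmann it is a locally trivial fiber bundle; since the universal cover of the torus $A(\tilde X)$ is $\bC^q$ we have $\pi_2(A(\tilde X))=0$, and the homotopy exact sequence collapses to
$$
0\to\pi_1(F)\to\pi_1(\tilde X)\to\pi_1(A(\tilde X))=\bZ^{2q}\to 0 .
$$
All three groups are almost abelian --- $\pi_1(\tilde X)$ and $\pi_1(F)$ by part (5) applied to $\tilde X$ and to the torus-covered manifold $F$ --- so the Hirsch rank $\rho$ is defined and additive along the sequence, giving $\rho(\pi_1(\tilde X))=\rho(\pi_1(F))+2q\ge 2q$. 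For the opposite inequality I would invoke maximality: choosing a finite-index free abelian subgroup $A_0\le\pi_1(\tilde X)$ of rank $\rho(\pi_1(\tilde X))$ produces a connected finite \'etale cover $\hat X\to\tilde X$ with $\pi_1(\hat X)=A_0$, so that $b_1(\hat X)=\rho(\pi_1(\tilde X))$ and, $\hat X$ being K\"ahler (it is \'etale over the K\"ahler manifold $\tilde X$), $q(\hat X)=\tfrac12\rho(\pi_1(\tilde X))$. As $\hat X\to X$ is again finite \'etale, $q(\hat X)\le\tilde q(X)=q$ forces $\rho(\pi_1(\tilde X))\le 2q$. Combining the two inequalities yields $\rho(\pi_1(F))=0$, and a finitely generated almost abelian group of Hirsch rank zero is finite.

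I expect the last step to be the main obstacle, namely turning the maximality of $\tilde q(X)$ into the numerical identity $\rho(\pi_1(\tilde X))=2q$. The subtlety is that for an almost abelian group the first Betti number (which computes $q$) may be strictly smaller than the Hirsch rank --- it equals the rank of the invariants under the finite holonomy quotient --- so it is precisely the maximality that pins $b_1$ to the Hirsch rank and thereby forces the fiber group to have Hirsch rank zero. Care is also needed to ensure that $\hat X$ is connected and K\"ahler, so that $q(\hat X)=\tfrac12 b_1(\hat X)$ is legitimate, and that $\pi_1(F)\hookrightarrow\pi_1(\tilde X)$ (guaranteed by $\pi_2(A(\tilde X))=0$) so that the Hirsch-rank additivity applies to the displayed sequence.
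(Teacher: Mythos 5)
Your proposal is correct, and its first two steps coincide with the paper's: the paper likewise lifts $f$ to a torus cover $\tilde f:\tilde A\to\tilde X$ and invokes Proposition~4.1$\,$(7) for the submersion statement, and it likewise shows the fibers are torus-covered (though it identifies $\tilde f^{-1}(F)$ as a translate of a subtorus via its trivial normal bundle, whereas you surject a translate of the kernel component $S$ onto $F$ by a dimension count --- both work). Where you genuinely diverge is the finiteness of $\pi_1(F)$. The paper observes that every finite \'etale cover $\tilde F$ of $F$ is again torus-covered, hence has submersive Albanese map, and then cites Proposition~4.3 (i.e.\ [DPS94, Prop.~3.12]) to get the equality $\tilde q(\tilde X)=\tilde q(F)+\tilde q(A(\tilde X))$, which forces $\tilde q(F)=0$. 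You instead reprove the needed direction of that proposition by hand in this special case: Ehresmann makes $\alpha$ a locally trivial bundle, $\pi_2(A(\tilde X))=0$ yields the exact sequence $0\to\pi_1(F)\to\pi_1(\tilde X)\to\bZ^{2q}\to 0$, and Hirsch-rank additivity for the almost abelian groups furnished by 4.1$\,$(5) --- applied both to $\tilde X$ and to the torus-covered fiber $F$ --- combines with the maximality $q(\hat X)\le\tilde q(X)$, tested on the cover $\hat X$ with free abelian fundamental group via the K\"ahler identity $q=\frac12 b_1$, to pin $\rho(\pi_1(\tilde X))=2q$ and hence $\rho(\pi_1(F))=0$, so $\pi_1(F)$ is finite. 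The steps all check out (note $b_1(\hat X)$ is automatically even since $\hat X$ is K\"ahler, so $q(\hat X)=\frac12\rho$ is legitimate, and the injectivity $\pi_1(F)\hookrightarrow\pi_1(\tilde X)$ you flag is indeed what makes the rank bookkeeping valid). What the citation buys the paper is brevity; what your route buys is self-containedness --- you bypass the constant-rank-Albanese hypothesis of Proposition~4.3 and the nontrivial proof of [DPS94, 3.12], at the modest cost of Ehresmann's theorem and standard facts about finitely generated virtually abelian groups.
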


\begin{proof} There exists a lifting $\tilde f:\tilde A\to\tilde X$ of $f$ to
some \'etale cover of the torus $A$. Therefore $\tilde X$ is also covered by
a torus and 4.1~(7) implies that $\alpha$ is a submersion with connected
fibers.

Now consider a fiber $F$ of $\alpha$. Then $\tilde f^{-1}(F)$ has a
trivial normal bundle in $\tilde A$, hence $\tilde f^{-1}(F)$ must be
a translate of a subtorus. It follows from this that every fiber $F$ is
also covered by a torus, as well as any finite \'etale cover $\tilde
F$. By what we have just seen, the Albanese map of $\tilde F$ is a
submersion, and we can thus apply Proposition 4.3 below to obtain
$$
\tilde q(X) = \tilde q(F) + \tilde q(A(X)).
$$
Since $\tilde q(A(X))=\dim A(X)=q(X)=\tilde q(X)$, we conclude that
$\tilde q(F) = 0\,$; in other words, $\pi_1(F)$ is finite.
\end{proof}

\begin{proposition} Let $X$ and $Y$ be compact K\"ahler manifolds and $g: X \to Y$ be a surjective submersion with connected fibers. Let $F$ be a fiber
of $g.$ Then
$$
\tilde q(X) \leq \tilde q(F) + \tilde q(Y). \eqno (*).
$$
Assume moreover that $Y$ is a torus, that $\pi_1(F)$ is almost abelian
and that for every finite \'etale cover $\tilde F \to F$, the Albanese
map of $\tilde F\to A(\tilde F)$ is a constant rank map. Then equality
in $(*)$ holds.
\end{proposition}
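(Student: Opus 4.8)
The plan is to extract both the inequality and the equality case from the Leray spectral sequence of the fibration, using that it degenerates at $E_2$ for a proper smooth morphism of compact K\"ahler manifolds (Deligne). Recall that for such a fibration $g'\colon X'\to Y'$ the degeneration gives
$$
b_1(X')=b_1(Y')+\dim H^1(F',\bC)^{\pi_1(Y')},
$$
where $F'$ is a fiber and the last term is the monodromy invariant part of $H^1(F',\bC)$; since $H^1(F',\bC)^{\pi_1(Y')}\subseteq H^1(F',\bC)$ this already yields $q(X')\le q(Y')+q(F')$ (using $q=\tfrac12 b_1$ on K\"ahler manifolds).

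To obtain $(*)$ for $\tilde q$, I would reduce every finite \'etale cover to this situation. Given $p\colon X'\to X$ finite \'etale, form the Stein factorization $X'\to Y'\to Y$ of $g\circ p$. Because $g$ is a locally trivial fiber bundle (Ehresmann), the number of sheets of $X'\to X$ over a fiber of $g$ is locally constant, so $Y'\to Y$ is finite \'etale; hence $Y'$ is compact K\"ahler with $q(Y')\le\tilde q(Y)$, the induced map $g'\colon X'\to Y'$ is a submersion with connected fibers, and a fiber $F'$ is a finite \'etale cover of $F$, so $q(F')\le\tilde q(F)$. Combining with the displayed inequality gives $q(X')\le\tilde q(Y)+\tilde q(F)$, and taking the supremum over all $X'$ proves $(*)$.

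For the equality, observe first that since $Y$ is a torus every such $Y'$ is again a torus, so $q(Y')=\dim Y=\tilde q(Y)$ and the base always contributes fully; equality in $(*)$ therefore amounts to producing a cover on which $\dim H^1(F',\bC)^{\pi_1(Y')}=2\tilde q(F)$, i.e. on which the monodromy acts trivially on $H^1(F',\bC)$ while simultaneously $q(F')=\tilde q(F)$. Since $Y$ is aspherical ($\pi_2(Y)=0$), the homotopy sequence becomes a group extension
$$
1\lra \pi_1(F)\lra \pi_1(X)\lra \pi_1(Y)\lra 1,
$$
and the monodromy is the resulting conjugation action of $\pi_1(Y)$ on $H_1(F,\bQ)=\pi_1(F)^{\mathrm{ab}}\otimes\bQ$. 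Using that $\pi_1(F)$ is almost abelian I would pass to a characteristic finite index free abelian subgroup $\Lambda\subseteq\pi_1(F)$ of rank $2\tilde q(F)$; the associated cover $F'\to F$ has $q(F')=\tilde q(F)$, and since $\Lambda$ is characteristic in the normal subgroup $\pi_1(F)$ it is normal in $\pi_1(X)$, so after replacing $Y$ by a suitable finite \'etale cover this fiberwise cover globalizes to a finite \'etale $X'\to X$ fibering over $Y'$ with fiber $F'$.

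The remaining, and main, difficulty is to trivialize the monodromy action of $\pi_1(Y')$ on $\Lambda\otimes\bQ=H^1(F',\bQ)$. This local system underlies the weight one polarized variation of Hodge structure carried by $R^1 g'_*\bC$, so it is enough to show its period map is constant, equivalently that the monodromy image in $\mathrm{GL}(\Lambda)$ is finite and can be killed by one more finite \'etale cover $Y''\to Y'$. This is exactly the point at which the hypothesis that the Albanese map of every finite \'etale cover of $F$ has constant rank is decisive: it should force the relative Albanese family of $X'/Y'$ to become, after a cover, a trivial family of tori, so that every holomorphic $1$-form on $F''$ extends to a relative $1$-form on $X''$ and is monodromy invariant. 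Granting this, on $X''$ one has $\dim H^1(F'',\bC)^{\pi_1(Y'')}=b_1(F'')=2\tilde q(F)$ and $q(Y'')=\tilde q(Y)$, whence $q(X'')=\tilde q(Y)+\tilde q(F)$ and $\tilde q(X)\ge\tilde q(Y)+\tilde q(F)$, giving equality. I expect the isotriviality (finiteness of the monodromy), deduced from the constant rank condition together with almost abelianness, to be the crux of the whole argument.
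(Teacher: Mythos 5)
The paper offers no argument of its own here: its entire proof reads ``This is proposition 3.12 in [DPS94]'', so your attempt has to be judged as a reconstruction of that cited result. Your proof of the inequality $(*)$ is correct and essentially complete: for a proper submersion the sheaf $R^1g'_*\bC$ is locally constant, the five-term exact sequence of the Leray spectral sequence (you do not even need Deligne degeneration for the $\le$ direction) gives $b_1(X')\le b_1(Y')+\dim H^1(F',\bC)^{\pi_1(Y')}$, and your Stein factorization of $g\circ p$ for a finite \'etale $p:X'\to X$ -- with Ehresmann's theorem guaranteeing that $Y'\to Y$ is finite \'etale, that $X'\to Y'$ is again a submersion with connected fibers, and that each fiber $F'$ is a finite \'etale cover of $F$ -- correctly reduces an arbitrary cover of $X$ to that situation; $q=\tfrac12 b_1$ on compact K\"ahler manifolds then yields $(*)$.

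The equality half, however, has a genuine gap at its decisive point, and you acknowledge it yourself. Your preliminary reductions are sound or repairable: covers of the torus $Y$ are tori of the same irregularity; a characteristic free abelian $\Lambda\le\pi_1(F)$ of rank $2\tilde q(F)$ exists because $\pi_1(F)$ is finitely generated and almost abelian (and any finite-index subgroup $H$ satisfies $b_1(H)\le{\rm rank}\,\Lambda$ by a transfer argument, so $\tilde q(F)$ is attained and stable); globalizing the fiberwise cover needs an argument you omit but which works -- the quotient $\pi_1(X)/\Lambda$ is an extension of $\bZ^{2q}$ by the finite normal subgroup $\pi_1(F)/\Lambda$, and passing to the centralizer and then using the central commutator pairing produces a finite-index subgroup meeting that finite subgroup trivially, i.e.\ a cover $X'\to X$ with fiber group exactly $\Lambda$. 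But the whole proof then rests on the finiteness of the monodromy image $\pi_1(Y')\to{\rm GL}(\Lambda)$, and here you only write that the constant-rank hypothesis ``should force'' isotriviality of the relative Albanese family and that you ``expect'' this to be the crux. That expectation is exactly the content of [DPS94, Prop.\ 3.12] being cited; your stated mechanism -- that every holomorphic $1$-form on $F''$ extends monodromy-invariantly -- is a restatement of the desired conclusion, not a derivation from the hypotheses. To close the gap you would need either the actual DPS94 argument or an independent one, e.g.\ Deligne semisimplicity of the monodromy of the weight-one variation $R^1g'_*\bQ$ combined with constancy of the period map from the flat torus $Y'$ into the Siegel modular quotient (a Schwarz-lemma argument for maps into quotients of bounded symmetric domains), after which the finite image is killed by one further cover and Deligne degeneration gives $b_1(X'')=b_1(Y'')+b_1(F'')$. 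As written, the main step of the equality case is missing.
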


\begin{proof} This is proposition 3.12 in [DPS94].
\end{proof}

\begin{theorem} If $H^0(X,\Omega^p_X) \ne 0$ for some $p \geq 1,$ then
$\tilde q(X) > 0.$
\end{theorem}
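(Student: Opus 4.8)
The plan is to prove the contrapositive: assuming $\tilde q(X)=0$, I will show that $H^0(X,\Omega^p_X)=0$ for every $p\ge 1$. First I would reduce to the simply connected case. Since $\tilde q(X)=0$ forces $q(X)=0$ and hence $A(X)=0$, I may apply Theorem 4.2 with $\tilde X=X$: the Albanese fibre is then all of $X$, so $\pi_1(X)$ is finite. Passing to the universal cover $u\colon\hat X\to X$, which is therefore a finite \'etale map with $\hat X$ compact, simply connected and still covered by a torus $\hat A=\hat X\times_X A$, the pullback $u^*\colon H^0(X,\Omega^p_X)\to H^0(\hat X,\Omega^p_{\hat X})$ is injective. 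Thus it suffices to prove $H^0(\hat X,\Omega^p_{\hat X})=0$ for all $p\ge 1$, where $N=\dim\hat X=\dim\hat A$.

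For this I would exploit the flat structure inherited from the torus. The covering $f\colon\hat A\to\hat X$ is \'etale over the complement of its branch locus, so $\hat X$ carries a flat (translation) affine structure away from a divisor; because $\hat X$ is smooth and compact this realises $\hat X$ as a quotient $\tilde V/\Gamma$ of the universal cover $\tilde V\cong\bC^N$ of $\hat A$ by a cocompact discrete group $\Gamma\subset\mathrm{Aff}(\tilde V)$. By Bieberbach's theorem the translations $\Lambda'=\Gamma\cap(\text{translations})$ form a lattice of finite index, the point group $W=\Gamma/\Lambda'$ is finite and acts on the space $V$ of constant holomorphic $1$-forms of the torus $\hat A'=\tilde V/\Lambda'$, and $\hat A'\to\hat X$ is a finite Galois cover with group $W$ by a torus. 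Since $\hat X$ is smooth, invariant forms descend, so $H^0(\hat X,\Omega^p_{\hat X})=(\Lambda^p V)^{W}$; in particular $V^{W}=H^0(\hat X,\Omega^1_{\hat X})=0$ because $\hat X$ is simply connected.

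The crux is to show that $W$ is generated by complex reflections. Over a general point of each branch divisor the finite map $f$ has the local normal form $(z_1,\dots,z_N)\mapsto(z_1^e,z_2,\dots,z_N)$, so the local monodromy acts on $V$ as a pseudoreflection; smoothness of $\hat X$ guarantees, via Chevalley--Shephard--Todd, that the full stabiliser of every fixed point acts as a complex reflection group. Simple connectivity then enters through Armstrong's theorem: $\pi_1(\hat X)=\Gamma/\langle\gamma:\gamma\text{ has a fixed point}\rangle=1$, so $\Gamma$ is generated by its elements with fixed points; passing to linear parts, $W$ is generated by pseudoreflections. Making this crystallographic picture rigorous---justifying the descended flat structure and the passage to a Galois torus cover even though $f$ itself need not be Galois---is the step I expect to be the main obstacle.

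Finally the conclusion is pure linear algebra. If $s\in W$ is a pseudoreflection fixing the hyperplane $H_s\subset V$ and scaling a complementary line by $\zeta\ne 1$, then the $s$-invariants in $\Lambda^pV$ are exactly $\Lambda^pH_s$. As $W$ is generated by such $s$, and $V^W=\bigcap_s H_s$, the elementary identity $\bigcap_i\Lambda^pU_i=\Lambda^p(\bigcap_iU_i)$ for linear subspaces gives
\[
(\Lambda^pV)^{W}=\bigcap_{s}(\Lambda^pV)^{\langle s\rangle}=\bigcap_{s}\Lambda^pH_s=\Lambda^p\left(\bigcap_{s}H_s\right)=\Lambda^p(V^{W})=\Lambda^p(0)=0
\]
for every $p\ge 1$, using $V^{W}=0$. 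Hence $H^0(\hat X,\Omega^p_{\hat X})=0$, so $H^0(X,\Omega^p_X)=0$, which proves the theorem by contraposition.
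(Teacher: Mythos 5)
The crux of your argument --- presenting the simply connected cover $\hat X$ as a crystallographic quotient $\bC^N/\Gamma$ with $\Gamma\subset\mathrm{Aff}(\bC^N)$ discrete and cocompact --- is a genuine gap, and you correctly flag it yourself. The finite map $f:\hat A\to\hat X$ is not Galois and carries no group of deck transformations, so there is nothing with which to generate $\Gamma$: off the branch locus, the transitions between the sheets of $f$ are merely local biholomorphisms $\phi$ between open subsets of $\hat A$ satisfying $f\circ\phi=f$; they are not affine and need not extend to automorphisms of $\hat A$ (think of a generic linear projection of an abelian surface $A\subset\bP_N$ onto $\bP_2$: the sheet interchanges are arbitrary local analytic maps, and the Galois closure of the function-field extension $\KC(\bP_2)\subset\KC(A)$ is a variety with large monodromy group which is certainly not a torus). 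So $\hat X$ inherits no flat orbifold affine structure from $f$, and the developing-map/Bieberbach machinery never gets off the ground. A posteriori your claim is true --- Theorem 1.1 shows $\hat X$ is a product of projective spaces, and each $\bP_n=E^n/(\bZ_2\wr S_n)$ is indeed such a quotient --- but that is essentially the conclusion of the entire paper (resting on Hwang--Mok), so invoking it here is circular. Granting the crystallographic presentation, the remainder of your argument (Chevalley--Shephard--Todd for the stabilizers, Armstrong's theorem to conclude that the point group $W$ is generated by pseudoreflections since $\hat X$ is simply connected, and the identity $(\Lambda^pV)^W=\Lambda^p(V^W)=0$) is correct and attractive; but the missing step is not a routine technicality to be smoothed over --- it is the structure theorem itself in disguise.

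For contrast, the paper's proof is short and entirely different. Given $0\ne u\in H^0(X,\Omega^p_X)$ with $p\ge2$, contraction with $u$ defines $\Phi:\bigwedge^{p-1}T_X\to\Omega^1_X$; pulling back by $f$ sandwiches $f^*\Phi$ between the trivial bundles $\bigwedge^{p-1}T_A$ and $\Omega^1_A$, so it is given by a constant matrix, whence $\Phi$ has constant rank and its image $\sE\subset\Omega^1_X$ is a numerically flat vector bundle. By [DPS94, Theorem 1.18], $\sE$ admits a filtration with unitary flat quotients; if $\tilde q(X)=0$ then $\pi_1(X)$ is finite, so on a simply connected finite \'etale cover the pullback of $\sE$ is trivial, producing a nonzero holomorphic $1$-form there --- contradicting $\tilde q(X)=0$. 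If you want a flat-geometry proof along your lines, you would first have to establish the structure theorem, at which point Theorem 4.4 is no longer needed.
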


\begin{proof} We may assume $p \geq 2$ and choose a non-zero holomorphic
$p$-form $u$. Let
$$ \Phi: \bigwedge^{p-1}T_X \to \Omega^1_X$$
be the morphism defined by contraction of $(p-1)$-vectors with $u$, and
let $\sE = {\rm Im} \Phi \subset \Omega^1_X$ be the image of $\Phi$.
A priori $\sE $ is a torsion free sheaf of rank $r$. However, if we pull-back
the morphism to $A$ by $f^*$, we get a composition
$$
\bigwedge^{p-1}T_A \to \bigwedge^{p-1}f^*T_X
\build\longrightarrow^{f^*\Phi}_{} f^*\Omega^1_X \to \Omega^1_A
$$
The bundles on both sides are trivial, hence the composition is given by a
constant matrix. This implies that $f^*\Phi$ itself is a bundle morphism of
constant rank and that $f^*\sE$ is a trivial bundle. As a consequence,
$\Phi$  is of constant rank and $\sE$ is a numerically flat vector bundle
on $X$. By [DPS94, Theorem~1.18], $\sE$ admits a filtration by subbundles whose quotients are unitary flat. If we had $\tilde q(X) = 0$, then $\pi_1(X)$ would
be finite and a suitable \'etale cover $\tilde X$ would be simply connected,
thus the pull-back $\tilde\sE\subset \Omega^1_{\tilde X}$ would be trivial.
But then we have $\smash{H^0(\tilde X,\Omega^1_{\tilde X})}\ne 0$, contradiction.
\end{proof}

\begin{corollary} Assume that $\tilde q(X)=0$ $($or equivalently, that
$\pi_1(X)$ is finite$)$. Then $X$ is projective.
\end{corollary}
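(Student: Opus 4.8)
The plan is to reduce projectivity to the vanishing of $H^{2,0}$ and then invoke Kodaira's embedding theorem, using the K\"ahler property already established in Theorem~3.1. I only intend to use the hypothesis in the form $\tilde q(X)=0$.

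First I would apply Theorem~4.4 in its contrapositive form: since $\tilde q(X)=0$, we must have $H^0(X,\Omega^p_X)=0$ for every $p\geq 1$. In particular $H^0(X,\Omega^2_X)=0$, that is $h^{2,0}=0$. As $X$ is K\"ahler, Hodge symmetry gives $h^{0,2}=h^{2,0}=0$, so $H^2(X,\sO_X)=0$ and the Hodge decomposition collapses to $H^2(X,\bC)=H^{1,1}(X)$. Since $H^{1,1}(X)$ is stable under conjugation, this means that $H^2(X,\bR)$ coincides with the space of real $(1,1)$-classes.

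Next I would produce a rational K\"ahler class. The K\"ahler cone is a nonempty open cone inside $H^{1,1}(X,\bR)=H^2(X,\bR)$, and the lattice $H^2(X,\bQ)$ is dense in $H^2(X,\bR)$; hence the cone meets $H^2(X,\bQ)$, and after clearing denominators we obtain an integral K\"ahler class $\kappa\in H^2(X,\bZ)$. By the Lefschetz theorem on $(1,1)$-classes, read off from the exponential sequence using $H^2(X,\sO_X)=0$, the class $\kappa$ is of the form $c_1(L)$ for some line bundle $L$ on $X$. This $L$ then carries a smooth positive curvature form representing $\kappa$, so Kodaira's embedding theorem shows that $X$ is projective.

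The only genuine input is the vanishing $h^{2,0}=0$ furnished by Theorem~4.4; everything afterwards is the classical passage from ``K\"ahler with $h^{2,0}=0$'' to ``projective,'' so I do not expect a serious obstacle. If one prefers brevity, the last two paragraphs may be replaced by a single citation of the standard fact that a compact K\"ahler manifold with $h^{2,0}=0$ is automatically projective.
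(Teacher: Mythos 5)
Your proposal is correct and follows exactly the paper's own argument: the contrapositive of Theorem~4.4 yields $H^0(X,\Omega^2_X)=0$, and then the standard Kodaira-type argument (which the paper compresses into the single citation of the Kodaira embedding theorem) gives projectivity. Your expanded middle paragraphs simply spell out the classical passage from ``K\"ahler with $h^{2,0}=0$'' to ``projective'' -- rational K\"ahler class via density, Lefschetz $(1,1)$ via the exponential sequence -- all of which is accurate and implicit in the paper's two-line proof.
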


\begin{proof} In fact, Theorem 4.4 implies in that case $H^0(X,\Omega^2_X)=0$.
Since $X$ is K\"ahler, we conclude that $X$ is also projective by the Kodaira
embedding theorem.
\end{proof}

\begin{proposition} In addition to the initial hypotheses, assume that $X$
admits a smooth fibration $g:X\to Y$. Then$\;:$
\begin{enumerate}
\smallskip
\item There is a commutative diagram
$$
\xymatrix{A \ar[rr]^{f} \ar[d]^p & & X
   \ar[d]^g
   & \\
  U \ar[rr]^{h} & & Y&}
$$
where $p:A\to U=A/S$ is the quotient map associated with some subtorus $S$
of~$A$ and $h:U\to Y$ is finite.
\smallskip
\item The relative anticanonical bundle $-K_{X/Y}$ is nef.
\smallskip
\item If $X=\bP(E)$ is the projectivization of some vector bundle $E$
of rank $r$ on $Y$, then $E$ is numerically projectively flat, i.e.\
$E\otimes\det(E)^{-1/r}$ is numerically flat $($or equivalently,
$S^rE\otimes\det(E)^{-1}$ is numerically flat$)$.
\end{enumerate}
\end{proposition}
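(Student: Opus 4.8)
\noindent
The plan is to treat the three assertions in turn, the first producing the diagram, the second and third following from it fairly formally.

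For (1) I would apply Stein factorization to the composite $\phi=g\circ f:A\to Y$. Since $g$ is a submersion, every fibre $F=g^{-1}(y)$ has trivial normal bundle in $X$; as $f$ is finite, the preimage $f^{-1}(F)$ then carries the pulled-back (hence trivial) normal bundle in $A$, so each of its connected components is a translate of a subtorus of $A$ — exactly the mechanism already used in Section 2 and in the proof of Theorem 4.2. Because the subtori of $A$ form a discrete family, these translates must share a single underlying subtorus $S$ as $y$ varies. Consequently the Stein factorization $A\to W\to Y$ of $\phi$ has $W=A/S$, the first map being the quotient $p:A\to U:=A/S$ whose fibres are precisely the translates of $S$, and the second a finite map $h:U\to Y$. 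The relation $g\circ f=h\circ p$ is then the asserted commutative diagram.

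For (2) I would pull $-K_{X/Y}$ back to $A$ and invoke the relative canonical bundle formula. Since $g\circ f=h\circ p$ with $h$ finite, we get $f(p^{-1}(u))\subset g^{-1}(h(u))$, so $f$ maps the fibres of $p$ into those of $g$ and the differential induces a sheaf map $f^*\Omega^1_{X/Y}\to\Omega^1_{A/U}$ between bundles of the same rank $\dim F$. Its determinant is a section of $K_{A/U}\otimes f^*(-K_{X/Y})$ vanishing along the relative ramification divisor $R_{\rm rel}\ge 0$. But $U=A/S$ and $A$ are tori, so $K_{A/U}=K_A-p^*K_U=0$; hence $f^*(-K_{X/Y})=\sO_A(R_{\rm rel})$ is effective, in particular pseudo-effective. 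A pseudo-effective line bundle on the torus $A$ is nef, and nefness descends along the finite surjective map $f$ by [DPS94, 1.8], exactly as in 4.1(1). Therefore $-K_{X/Y}$ is nef.

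For (3) I would normalize by setting $E'=E\otimes(\det E)^{-1/r}$, a $\bQ$-twist with $\det E'\equiv 0$ and $\bP(E')=\bP(E)=X$. With $\xi'=c_1(\sO_{\bP(E')}(1))$ the relative Euler sequence gives $-K_{X/Y}=r\,\xi'+\pi^*\det E'=r\,\xi'$. By (2) this class is nef, hence $\xi'$ is nef, which says that $E'$ is a nef vector bundle. Combined with $\det E'\equiv 0$ this is precisely the definition of numerical flatness of $E'$ in the sense of [DPS94, 1.18], i.e. $E$ is numerically projectively flat; the equivalent formulation is immediate from $S^rE'=S^rE\otimes(\det E)^{-1}$.

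The step I expect to be the main obstacle is the rigorous part of (1): combining the trivial-normal-bundle argument with the rigidity of subtori to conclude that the components of $f^{-1}(F)$ all translate a single subtorus $S$. This requires some care about the smoothness of the preimages $f^{-1}(F)$ along the ramification divisor before the normal-bundle computation is legitimate, whereas (2) is then a formal consequence of $K_{A/U}=0$ together with the nef-descent already exploited in 4.1(1), and (3) is immediate from the Euler sequence and the DPS structure theory. I would also double-check the projectivization convention in $-K_{X/Y}=r\,\xi'$, since strictly it determines whether it is $E'$ or its dual that is directly seen to be nef — but numerical flatness is self-dual, so the conclusion is insensitive to that choice.
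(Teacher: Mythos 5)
Your proposal is correct, and parts (1) and (3) are essentially the paper's argument, but your part (2) takes a genuinely different route. For (1), the paper simply applies its Corollary 2.2 to the surjection $g\circ f:A\to Y$; your reconstruction of the Stein factorization mechanism is the content of Section 2, and the smoothness worry you flag about $f^{-1}(F)$ dissolves if you argue as the paper does, on the \emph{generic} fibres of $g\circ f$ (which are automatically smooth with trivial normal bundle, by Sard), rather than on preimages $f^{-1}(F)$ of arbitrary fibres of $g$. For (2), the paper's proof goes: the diagram induces a generic isomorphism $T_{A/U}\to f^*(T_{X/Y})$, so $T_{X/Y}$ is nef on every curve $C\not\subset f(R)$, whence $-K_{X/Y}\cdot C\ge 0$ for such $C$; pseudo-effectivity of $-K_{X/Y}$ then follows from the duality theorem of [BDPP04], and nefness from 4.1$\,$(1). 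You instead take the determinant of $f^*\Omega^1_{X/Y}\to\Omega^1_{A/U}$, use $K_{A/U}=0$ to exhibit $f^*(-K_{X/Y})=\sO_A(R_{\rm rel})$ as effective -- the relative analogue of the absolute relation $K_A=f^*K_X+R$ from Section 4 -- and then use that pseudo-effective classes on a torus are nef together with descent of nefness under finite surjections [DPS94, 1.8], [Pau98]. Your route is more elementary and self-contained: it produces an explicit section and avoids the BDPP duality theorem entirely, at the small cost of checking that the relative differential is well defined and generically an isomorphism (both fine here, since $g$ is a smooth fibration, $h$ is finite so the relative dimensions agree, and $df$ is an isomorphism off $R$); the paper's route requires no determinant bookkeeping but invokes heavier machinery. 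For (3) your $\bQ$-twist phrasing is the same computation as the paper's $-K_{X/Y}=r\zeta+\phi^*(\det E)^{-1}$, hence nefness of $S^r(E)\otimes\det E^{-1}=S^r(E\otimes\det E^{-1/r})$, which has $c_1=0$ and is therefore numerically flat; your remark that the projectivization convention is immaterial because numerical flatness is self-dual is accurate.
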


\begin{proof} (1) Since $g\circ f:A\to Y$ is a surjection, Corollary
2.2 shows that it factorizes as $g\circ f=h\circ p$ where $p:A\to
A/S$ is a quotient of $A$ and $h:A/S\to Y$ is a finite map.
\vskip .2cm

\noindent
(2) The diagram induces a generic isomorphism
$$T_{A/U} \to  f^*(T_{X/Y}), $$
so that $f^*(T_{X/Y})$ is generically spanned outside the ramification divisor
$R$ of $f$. Hence $T_{X/Y}$ is nef on all curves $C \not \subset f(R).$
In particular
$$ -K_{X/Y} \cdot C \geq 0$$
for all $C \not \subset f(R).$
Therefore $-K_{X/Y}$ is pseudo-effective by [BDPP04]. We infer from 4.1$\,$(1)
that $-K_{X/Y}$ is actually nef.
\vskip .2cm

\noindent
(3) Assuming that $X=\bP(E)\to Y$, let
$$ \zeta = \sO_{\bP(E)}(1).$$
Since
$$-K_{X/Y} = r\zeta + \phi^*(\det E)^{-1},$$
we conclude that
$$ S^r(E) \otimes \det E^{-1}=S^r\big(E\otimes\det E^{-1/r}\big)$$ is nef.
Since $ c_1(S^r(E) \otimes \det E^{-1}) = 0$, this bundle is numerically flat in the sense of [DPS94], i.e.\ it admits
a filtration whose quotients are hermitian flat (${}={}$unitary
representations of the fundamental group).
\end{proof}

\section{Case of a manifold with finite fundamental group}
\setcounter{lemma}{0}

In this special case, the structure theorem can be stated in a slightly
simpler form.

\begin{proposition} Assume that $X$ is a compact complex manifold
possessing a finite surjective map $f:A\to X$ from a torus. If
$X$ has a finite fundamental group $($or, equivalently, if
$\tilde q(X)=0)$, then
\smallskip
\begin{enumerate}
\item $A$ is an abelian variety and $X\simeq
\bP_{n_1}\times \ldots\times\bP_{n_k}$ is a product of projective spaces.
\smallskip
\item There
is a finite \'etale quotient $\bar A=A/\Gamma$ such that $\bar A$ splits as a product
$\bar A=A_1\times \ldots\times A_k$, and the map $f$ factorizes
through $\bar A$ as a product map $\bar f=f_1\times \ldots\times f_k$ where
$f_j:A_j\to\bP_{n_j}$.
\end{enumerate}
\end{proposition}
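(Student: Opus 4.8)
The plan is to first reduce to the case where the covering torus is an abelian variety, then peel off the projective-space factors one at a time along a tower of bundles. I would begin with the easy reductions. Since $\tilde q(X)=0$, Corollary 4.5 gives that $X$ is projective; pulling back an ample line bundle $L$ on $X$, the bundle $f^*L$ is big on $A$ and hence ample by [BL04, Proposition 4.5.2], so $A$ is an abelian variety. Moreover, Theorem 4.4 read contrapositively shows that $\tilde q(X)=0$ forces $H^0(X,\Omega^p_X)=0$ for all $p\ge 1$; in particular $h^{1,0}(X)=h^{0,1}(X)=0$, a vanishing I will invoke later to split extensions of vector bundles.

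With $A$ now abelian, the Hwang--Mok structure theorem [HM01] applies and writes $X$ as a tower of projective bundles
$$X=X_0\to X_1\to\ldots\to X_k=Y$$
with fibers $\bP_{n_j}$ over a base $Y$ that is an unramified quotient of an abelian variety. The key observation is that both $\pi_1$ and the irregularity are unchanged along such a tower: the fibers $\bP_{n_j}$ are simply connected with $q=0$, so on each step the Albanese is unchanged and, by the homotopy exact sequence, every finite \'etale cover of a $\bP_n$-bundle is pulled back from the base. Hence $\tilde q(X)=\tilde q(Y)$, while $\tilde q(Y)=\dim Y$ since $Y$ is covered by an abelian variety. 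As $\tilde q(X)=0$, I conclude $\dim Y=0$, so $Y$ is a point and $X$ is a tower of projective bundles over a point.

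Next I would trivialize the tower by downward induction, showing $X_j\simeq\bP_{n_j}\times\ldots\times\bP_{n_{k-1}}$. At the top $X_{k-1}$ is a $\bP_{n_{k-1}}$-bundle over a point, so $X_{k-1}=\bP_{n_{k-1}}$. For the inductive step write $X_j=\bP(E)\to X_{j+1}$, where by induction $X_{j+1}$ is a product of projective spaces, hence simply connected with $h^{1,0}=0$ and trivial Brauer group (so the $\bP_{n_j}$-bundle is genuinely a projectivized vector bundle). Since $A\to X\to X_j$ is surjective, Corollary 2.2 shows $X_j$ is again a finite image of a torus, so Proposition 4.7(3) applies and $E$ is numerically projectively flat, i.e. $S^{n_j+1}E\otimes\det E^{-1}$ is numerically flat. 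By [DPS94, Theorem 1.18] this bundle carries a filtration with hermitian flat quotients; on the simply connected base $X_{j+1}$ those quotients are trivial, and $H^1(X_{j+1},\sO)=0$ forces the extensions to split, so the bundle is trivial. Thus $E$ is projectively trivial and $\bP(E)\simeq X_{j+1}\times\bP_{n_j}$, completing the induction. Taking $j=0$ yields $X\simeq\bP_{n_1}\times\ldots\times\bP_{n_k}$, which is assertion (1). I expect this trivialization step to be the main obstacle: one must be certain that numerical projective flatness over a simply connected base really yields an honest product and not merely a projectively flat twist, which is exactly where [DPS94, Theorem 1.18] together with the vanishing of $H^1(\sO)$ and of the Brauer group is essential.

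Finally, for the factorization (2), I would compose $f$ with the projections $\pi_j:X\to\bP_{n_j}$. Each $\pi_j\circ f:A\to\bP_{n_j}$ is surjective, so by Corollary 2.2 it factors through a finite map $f_j:A_j:=A/S_j\to\bP_{n_j}$ for some subtorus $S_j$, whence $\dim A_j=n_j$. Since the $\pi_j$ jointly recover $f$, the map $f$ factors through the combined homomorphism $\Psi:A\to\prod_j A_j$, whose kernel is the subtorus $S=\bigcap_j S_j$. Finiteness of $f$ forces $\dim(A/S)=\dim X=\dim A$, so $S=0$ and $\Psi$ is an isogeny onto $\bar A:=\prod_j A_j=A/\Gamma$ with $\Gamma$ finite. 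Then $f=\bar f\circ\Psi$ with $\bar f=f_1\times\ldots\times f_k$, which is exactly assertion (2).
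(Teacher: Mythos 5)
Your overall route is essentially the paper's: Corollary 4.5 to get projectivity and hence $A$ abelian, the Hwang--Mok tower with base forced to be a point, downward induction identifying each stage as $\bP(E)$ over a product of projective spaces (with the correct remark that the Brauer obstruction vanishes there), and numerical projective flatness of $E$ via Proposition 4.6$\,$(3) (which you miscite as 4.7$\,$(3) -- harmless). Your part (2) is a clean and correct variant of the paper's argument: where the paper factors the pulled-back line bundles $L_j$ through quotient tori using [BL04, 3.3.2] and sets $\Gamma=\bigcap S_j$, you factor the compositions $\pi_j\circ f$ directly through finite maps $A/S_j\to\bP_{n_j}$ via Corollary 2.2 and conclude by the dimension count $\sum n_j=\dim A$ that $\Psi:A\to\prod A_j$ is an isogeny; the only caveat is that $\bigcap S_j=\ker\Psi$ is a priori just a closed subgroup, not a subtorus, but since $f$ factors through $\Psi$ and is finite, this kernel is indeed a finite group $\Gamma$, which is what you use.

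The one genuine gap is the step you yourself flag as the main obstacle. From [DPS94, Theorem 1.18], $\pi_1(X_{j+1})=1$ and $H^1(X_{j+1},\sO_{X_{j+1}})=0$ you correctly conclude that $S^{r}E\otimes\det E^{-1}$ (with $r=n_j+1$) is a \emph{trivial} bundle, but you then assert without argument that ``$E$ is projectively trivial''. This is not a formal consequence: $S^{r}E\otimes\det E^{-1}$ is only the bundle associated to the ${\rm PGL}_r$-bundle $\bP(E)$ by one particular representation, and triviality of an associated bundle does not by itself trivialize the principal bundle. The paper closes exactly this step with Lemma 5.2: restricting to a line $\ell$, Grothendieck's theorem gives $E_{|\ell}=\bigoplus\sO(a_i)$, and triviality (even mere numerical flatness) of $S^{r}E\otimes\det E^{-1}$ on $\ell$ forces all $a_i$ equal to a single $a$; then $E\otimes\sO(-a)$ is trivial on every line, hence trivial by [OSS80], so $E\simeq L^{\oplus r}$. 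Alternatively, staying within your framework: $E\otimes E^{*}$ is a direct summand of $(S^{r}E\otimes\det E^{-1})\otimes(S^{r}E\otimes\det E^{-1})^{*}$, and a direct summand of a trivial bundle on a compact connected base is trivial (the projector is a global endomorphism of $\sO^{\oplus N}$, i.e.\ a constant matrix); then evaluation gives an algebra isomorphism $H^0(X_{j+1},\Hom(E,E))\simeq M_r(\bC)$, and the constant rank-one idempotents split $E$ as $L^{\oplus r}$. Either patch is short, but as written the inference from triviality of $S^{r}E\otimes\det E^{-1}$ to $\bP(E)\simeq X_{j+1}\times\bP_{n_j}$ is unproven, and it is precisely the content of the paper's Lemma 5.2.
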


\begin{proof}
(1) In fact, we have showed in Corollary 4.5 that $X$ must be projective
algebraic, hence the pull-back of an ample line bundle on $X$ is ample
on $A$ and $A$ is an abelian variety.
By the result of Hwang-Mok [HM01]
already cited in the introduction, $X$ is a tower
$$
X=X_0\to X_1\to\ldots\to X_k=Y
$$
of projective bundles, and the base $Y$ has to be a point, otherwise
the fundamental group would be infinite. By induction on dimension, we
can assume that \hbox{$\phi:X\to X_1$} is a $\bP_{n_1}$-projective
bundle over $X_1$, and since $X_1$ is also covered by the torus $A$
which maps onto $X$, that $X_1=\bP_{n_2}\times
\ldots\times\bP_{n_k}$. Therefore $X=\bP(E)$ for a certain vector
bundle $E\to X_1$. By proposition 4.6$\,$(3), we conclude that
$E$ is projectively numerically flat. However, by Lemma 5.2 below,
this implies that $E=L^{\oplus r}$ for some line bundle $L$ on $X_1$,
$r=n_1+1$, hence $X=\bP_{n_1}\times\ldots\times\bP_{n_k}$.
(Note that, as a consequence, $X$ must in fact be simply
connected$\,$: this is not surprising, since algebraic automorphisms
of $\bP_{n_1}\times \ldots\times\bP_{n_k}$ always have a fixed point
by the Lefschetz fixed point formula, hence $\bP_{n_1}\times
\ldots\times\bP_{n_k}$ cannot possess an \'etale quotient).
\vskip .2cm

\noindent
(2) Let $L_j$ be the pull-back of $\sO(1)$ by the composition $A\to
X\to\bP_{n_j}$ with the $j$-th projection. As $L_j$ is generated by
sections, [BL04, 3.3.2] implies that there is a factorization $A\to
W_j=A/S_j$ by a subtorus $S_j$ and an ample line bundle $G_j$ over
$W_j$ such that $L_j$
is the pull-back of $G_j$ to~$A$. Take $\Gamma=\bigcap S_j$. Then all
our bundles $L_j$ descend to line bundles $\bar L_j$ over $A/\Gamma$ and
therefore, since $\bar L_1+\ldots+\bar L_k$ comes from a very ample
line bundle on $X$, $f$ also factorizes as $\bar f:\bar A=A/\Gamma\to
X$ and $\Gamma$ is finite.  We easily see that $\bar A$ is isomorphic
to the product of its subtori $A_j=\bigcap_{k\ne j}S_k/\Gamma$, and if
$f_j:A_j\to\bP_{n_j}$ is the map induced by the composition
$A_j\subset \bar A\to X\to\bP_{n_j}$, we have $\bar f=f_1\times
\ldots\times f_k$.
\end{proof}

\begin{lemma} Let $Y$ be a product of projective spaces, $E$ a vector
bundle of rank $r$ on $Y$. Suppose  that $E$ is projectively numerically
flat, i.e.\ that $S^r(E) \otimes \det  E^{-1}$ is numerically flat. Then
there exists a line bundle $L$ on $Y$ such that
$$ E \simeq L^{\oplus r}.$$
\end{lemma}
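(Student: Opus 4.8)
The plan is to reduce the whole statement to the triviality of the bundle $V:=S^r(E)\otimes\det E^{-1}$, then read off the splitting type of $E$ along lines, and finally invoke the classification of uniform bundles of balanced type. First I would exploit that $Y=\bP_{n_1}\times\ldots\times\bP_{n_k}$ is simply connected and satisfies $H^1(Y,\sO_Y)=0$ (by the K\"unneth formula, since each $\bP_{n_j}$ has these two properties). By hypothesis $V$ is numerically flat, so by [DPS94, Theorem~1.18] it carries a filtration by subbundles whose graded pieces are hermitian flat, i.e.\ come from unitary representations of $\pi_1(Y)$. As $\pi_1(Y)=0$, each graded piece is a trivial bundle $\sO_Y^{\oplus m}$, and since every extension class in the filtration lies in a group of the form $H^1(Y,\sO_Y)^{\oplus m}=0$, the filtration splits step by step. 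Hence $V=S^r(E)\otimes\det E^{-1}\simeq\sO_Y^{\oplus N}$ is trivial.

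Next I would restrict to lines. For a line $\ell\simeq\bP_1$ contained in one of the factors, Grothendieck's theorem gives $E|_\ell\simeq\bigoplus_{i=1}^r\sO_\ell(a_i)$, so that $V|_\ell=\bigoplus_{|m|=r}\sO_\ell\big(\sum_i m_ia_i-\sum_i a_i\big)$. Since $V|_\ell$ is trivial, taking $m=r\,e_i$ forces $r\,a_i=\sum_j a_j$ for each $i$, whence all the $a_i$ coincide and $E|_\ell\simeq\sO_\ell(a)^{\oplus r}$ with $a=\tfrac1r(c_1(E)\cdot\ell)$. This value depends only on which family $\ell$ belongs to; writing $a_j$ for its value along lines in the $j$-th factor (in particular $r$ divides $c_1(E)$ in each direction), the line bundle $L:=\sO_Y(a_1,\ldots,a_k)$ is well defined and $F:=E\otimes L^{-1}$ satisfies $F|_\ell\simeq\sO_\ell^{\oplus r}$ on every such line. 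Thus $F$ is a uniform bundle of balanced splitting type $(0,\ldots,0)$, with $\det F=\sO_Y$ and $S^r(F)=S^r(E)\otimes L^{-r}=S^r(E)\otimes\det E^{-1}\simeq\sO_Y^{\oplus N}$.

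It then remains to show that such an $F$ is trivial, which I would do by induction on the number $k$ of factors. The base case $k=1$ is the classical statement that a uniform bundle on $\bP_n$ of balanced splitting type $(a,\ldots,a)$ is isomorphic to $\sO(a)^{\oplus r}$ (a result of Van de Ven and Sato; see Okonek--Schneider--Spindler); applied here it gives $F|_{\bP_{n_1}\times\{y'\}}\simeq\sO^{\oplus r}$ for every $y'\in Y'=\prod_{j\ge2}\bP_{n_j}$. For the inductive step I would push forward along the projection $\pi:Y\to Y'$: since $F$ is trivial on every fiber $\bP_{n_1}\times\{y'\}$, cohomology and base change show that $\pi_*F$ is locally free of rank $r$ and that the natural map $\pi^*\pi_*F\to F$ is an isomorphism, so $F=\pi^*G$ with $G=\pi_*F$. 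Restricting to lines in $Y'$ exhibits $G$ as again uniform of balanced type $0$, so by induction $G\simeq\sO_{Y'}^{\oplus r}$ and hence $F\simeq\sO_Y^{\oplus r}$. Untwisting gives $E\simeq L^{\oplus r}$, as claimed.

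The genuinely non-formal ingredient, and the step I expect to be the main obstacle, is the base case of this induction: the classification of balanced uniform bundles on a single projective space. Everything else --- the passage from numerical flatness to the triviality of $S^r(E)\otimes\det E^{-1}$, the elementary computation of the splitting type on lines, and the fibration argument over $Y'$ --- is straightforward once one has at hand the simple connectedness of $Y$ and the vanishing $H^1(Y,\sO_Y)=0$.
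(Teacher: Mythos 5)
Your proof is correct and follows essentially the same route as the paper's: restrict to lines, use Grothendieck's theorem to force the balanced splitting type, invoke the triviality criterion of [OSS80] on a single projective space, and reduce the product case to one factor. The only differences are that you fill in the two reductions the paper leaves as ``a simple argument'' (via the push-forward induction along $\pi:Y\to Y'$, which is a nice explicit treatment) and that your opening step upgrading $S^r(E)\otimes\det E^{-1}$ to a globally trivial bundle via [DPS94, Theorem~1.18] is superfluous --- numerical flatness restricted to a line already forces triviality on that line, since a nef bundle on $\bP_1$ with nef dual is trivial, which is all the line-by-line computation needs.
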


\begin{proof} By a simple argument it suffices to show the lemma for $Y = \bP_m.$ Let $\ell \subset Y$ be a line. Then $E_{|\ell}=\sO(\alpha_1)\oplus
\ldots\oplus\sO(\alpha_r)$ by Grothendieck's theorem, and it is
immediately seen that $E_{|\ell}$ is projectively numerically flat iff
all $\alpha_j$'s are equal to the same integer $\alpha\in\bZ$, where
$\alpha$ is such that $\det E=\sO(r\alpha)$. Hence
$$
(E\otimes\sO(-\alpha))_{|\ell} = \sO^{\oplus r}
$$
for every line $\ell\subset\bP_m$ and we conclude
that $E\otimes\sO(-\alpha) = \sO^{\oplus r}$
(see e.g.\ [OSS80]). This proves our claim in case the base
is $\bP_m$. The case of a product of projective spaces follows.
\end{proof}

\section{The anti-canonical morphism}
\setcounter{lemma}{0}

Let us  consider again a general compact (K\"ahler) manifold $X$ possessing
a finite ramified covering $f:A\to X$ by a torus.

\begin{proposition} The anti-canonical bundle $-K_X$ is semi-ample. Let
$g: X \to Z$ be the associated morphism, which we call the anti-canonical
morphism of $X$. Then $g$ has connected equidimensional fibers and
there is a commutative diagram
$$
\xymatrix{A \ar[rr]^{f} \ar[d]^p & & X
   \ar[d]^g
   & \\
  V \ar[rr]^{h} & & Z&}
$$
where $V=A/S$ is a quotient of $A$ by a subtorus $S$, and $h$ is finite.
Moreover, all fibers of $g$ admit a finite covering by the torus $S$.
\end{proposition}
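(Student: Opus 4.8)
The plan is to establish semi-ampleness of $-K_X$ by pulling back to the torus $A$, where the ramification divisor $R=f^*(-K_X)$ is simultaneously effective and nef (nefness by Proposition 4.1~(1)), and then to descend semi-ampleness through the finite map $f$; the geometric assertions about $g$ will follow by combining this with Corollary 2.2.

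First I would analyze the line bundle $\sO_A(R)=L(H,\chi)$ on the torus $A=V/\Lambda$ through its Hermitian form $H$. Nefness of $R$ forces $H\ge 0$, so the radical $N=\{v\in V:H(v,\cdot)=0\}$ is a complex subspace. The crucial point is that $N$ is rational, so that $S:=N/(N\cap\Lambda)$ is a genuine subtorus. Writing a nonzero section as a theta function $\theta$, the quantity $|\theta|\,e^{-\pi H(v,v)/2}$ is $\Lambda$-periodic, hence bounded, and since $H(v,v)$ is constant in the $N$-directions, Liouville forces $\theta$ to be constant along $N$; a nonzero section invariant along a dense, irrational real subspace cannot exist, so $N$ must be rational. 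Granting this, $R$ descends as $R=p^*\bar R$ along the quotient $p:A\to V:=A/S$, where $\bar R$ is an effective divisor of finite stabilizer on $V$, hence \emph{ample}. Thus $f^*(-K_X)$ is semi-ample on $A$.

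Next I would descend to $X$. Since semi-ampleness of a nef line bundle descends along the finite surjective morphism $f$, the bundle $-K_X$ is semi-ample on the normal manifold $X$; let $g:X\to Z$ be the associated anticanonical morphism, defined by $|-mK_X|$ for $m\gg 0$, so that $Z$ is projective and $g$ has connected fibers by construction. To obtain the commutative diagram I apply Corollary 2.2 to the surjection $g\circ f:A\to Z$, factoring it as $g\circ f=h\circ p'$ with $p':A\to A/S'$ the quotient by a subtorus $S'$ and $h:A/S'\to Z$ finite. The fibration $p'$ contracts exactly the cosets on which $-K_X$ pulls back to a trivial bundle; because $\bar R$ is ample these are precisely the cosets of the null-torus $S$ of $R$, so $S'=S$ and $A/S'=V$, and the square commutes with $h$ finite.

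Finally the fiber structure drops out of the diagram. For $z\in Z$ one has $f^{-1}(g^{-1}(z))=p^{-1}(h^{-1}(z))$, and since $h$ is finite this is a finite disjoint union of cosets of $S$, each of dimension $\dim S$; as $f$ is finite, every fiber $g^{-1}(z)$ has dimension $\dim S$, so $g$ is equidimensional, and the restriction of $f$ to one coset $S+a$ gives a finite surjection $S\to g^{-1}(z)$ onto a component of the connected fiber, which yields the asserted finite covering by the torus $S$. I expect the main obstacle to be the descent step: the torus argument only gives semi-ampleness of $f^*(-K_X)$, and passing back to $-K_X$ itself — equivalently, producing enough global sections of $-mK_X$ on $X$ rather than merely on $A$ — is where finiteness and normality of $X$ must be used in an essential way.
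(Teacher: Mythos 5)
Your proposal is correct in substance but takes a genuinely different route at the one crucial step, the semi-ampleness of $-K_X$ itself. The paper stays downstairs: it quotes [BL04, 3.3.2] for the descent $R=p^{-1}(R_V)$ to an ample divisor on a quotient torus (this is exactly the fact you reprove by hand with the Appell--Humbert form, the Liouville argument along the radical $N$, and the rationality of $N$ -- your sketch is the standard proof of the cited lemma), uses it only to compute $\kappa(-K_X)=\nu(-K_X)=\dim V$, and then invokes the Kawamata--Nakayama base-point-freeness theorems ([Kaw85, 6.1], [Nak87, 5.5]) to conclude that $-K_X$ is semi-ample on $X$. You instead prove semi-ampleness of $f^*(-K_X)$ on $A$ and descend it through the finite map $f$. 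Two remarks on that descent, which you rightly flag as the crux. First, it cannot be done through numerical invariants alone: nef together with $\kappa=\nu$ does not imply semi-ample (Zariski's big-and-nef examples), which is precisely why the paper needs the extra hypotheses on $aL-K_X$ in Kawamata's theorem, satisfied for $L=-K_X$. Second, the descent lemma you assert is nonetheless true and elementary: pass to a Galois closure $\pi:W\to X$ of $f$ with group $G$ and $W/G=X$ (in the analytic category this exists by Grauert--Remmert extension of the Galois covering over the unramified locus -- and you must indeed work analytically, since $X$ need not be projective when $\nu(-K_X)<n$); given $w\in W$, a generic linear combination of spanning sections of $\pi^*f^{**}\ldots$, more precisely of the spanned bundle $\pi^*\sO_X(-mK_X)$, is non-vanishing at every point of the finite orbit $Gw$, and the product $\prod_{g\in G}g^*s$ is $G$-invariant, hence descends (normality of $X$) to a section of $-m|G|K_X$ non-vanishing at $\pi(w)$. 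With this norm-trick lemma supplied, your route is complete, and arguably more self-contained than the paper's, which outsources both the torus decomposition and the abundance step to the literature; the price is that you must actually prove the descent rather than merely expect it. The remainder of your argument -- connected fibers for suitable $m$, Corollary 2.2 applied to $g\circ f$, the identification $S'=S$ by looking at where $f^*(-K_X)$ is numerically trivial against the ampleness of $\bar R$, equidimensionality, and the finite covering of the fibers of $g$ by cosets of $S$ -- coincides with the paper's proof.
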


\begin{proof} If $(-K_X)^n>0$, we know by 4.1$\,$(3) that $-K_X$ is ample, so
we have $Z=X$, $g=\id_X$, and we can take $V=A$, $p=\id_A$.

When $(-K_X)^n = 0$, the ramification divisor $R \subset A$ is nef but not ample
(let us recall that $\sO_A(R)=f^*(-K_X)$). Thus, applying again
[BL04, 3.3.2], we see that there exists a map $p: A \to V$ to a quotient
torus $V$ and an ample divisor $R_V \subset V$ such that
$$
R = p^{-1}(R_V).
$$
In particular, a sufficiently large multiple $\sO_A(kR)$ is spanned, and
the corresponding Kodaira-Iitaka map $\Phi_{|\sO_A(kR)|}$ defines $p.$

Given a nef line bundle $L$, let $\nu(L)$ denote its numerical dimension,
i.e.\ the maximal number $d$ such that $L^d  \not \equiv 0$. In order to
prove that $-K_X$ is semi-ample, it suffices to show that
$$ \kappa(-K_X) = \nu(-K_X),$$
(cf.\ [Kaw85, 6.1] and [Nak87, 5.5]$\;$; more generally, the result holds
true for nef line bundles
$L$ such that $\kappa(L-K_X)=\nu(L-K_X)$, $\nu(aL-K_X)=\nu(L-K_X)$ and
$\kappa(aL-K_X)\ge 0$ for some $a>1$, conditions which are indeed clearly
satisfied for~$L=-K_X$). The above equality $\kappa(-K_X)=\nu(-K_X)$ is
easily verified since
$$
\kappa (-K_X) = \kappa(f^*(-K_X)) = \dim V,
$$
and
$$
\nu(-K_X) = \nu(f^*(-K_X)) = \dim V.
$$
Therefore we obtain an associated morphism $g: X \to Z$ such that
$-mK_X = g^*(L) $ for a fixed suitable number $m$ and a very ample line
bundle $L$ on $Z$. By a general property of Kodaira-Iitaka maps, the
fibers of $g$ are connected if we fix an appropriate
multiple~$m$. Consider such a fiber $X_z$ of $g\,;$ then $f^{-1}(X_z)$
consists of a union of fibers of~$p$, because $p$ is defined
by $f^*(-K_X)$ and its sections are constant along the fibers of
$p:A\to V$. This implies that $g\circ f$ factors through $p$,
and therefore that there is a map $h:V\to Z$ which makes the diagram
commute. We have $\dim Z=\dim V=\kappa(-K_X)$, hence $h$ is a finite
map by Corollary 2.2.  The same result shows that $g\circ f$ has
equidimensional fibers, hence $g$ also has equidimensional fibers,
which must then be images of fibers of $p$ by $f$.
\end{proof}

\begin{proposition} In addition to the notation and hypotheses of
Proposition $6.1$, assume that $q=q(X)=\tilde q(X)$ $($possibly after
replacing $X$ with an \'etale cover$)$. Then
\begin{enumerate}
\item The Albanese map $\alpha:X\to A(X)$ is a smooth surjective fibration
with fibers $F\simeq\bP_{n_1}\times\ldots\times\bP_{n_k}$. Moreover,
$-K_X$ is ample along the fibers $F$ and has numerical dimension
equal to $\dim F= \sum n_j=n-q$.
\smallskip
\item If $R \subset A$ denotes the ramification
divisor of $f$, then $\sO_A(R)=f^*(-K_X)$ is ample along the fibers
of the composition $\alpha\circ f:A\to X\to A(X)$. Furthermore, the map
$$
\Phi=(p,\alpha\circ f):A\to V\times A(X)
$$
induced by the anti-canonical morphism for the first factor~$p$,
and by $\alpha \circ f$ for the second factor, is an isogeny
from $A$ onto $ V\times A(X)$. Moreover $p(f^{-1}f(R))$ is a
proper algebraic subset of $V$.
\end{enumerate}
\end{proposition}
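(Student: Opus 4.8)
The plan is to dispose of the easy structural assertions first and to isolate the single hard point, namely the computation of the numerical dimension of $-K_X$. First I would identify the Albanese fibers: since $q(X)=\tilde q(X)$, Theorem 4.2 applies directly to $X$ and shows that $\alpha:X\to A(X)$ is a submersion with connected fibers $F$ covered by a torus and satisfying $\tilde q(F)=0$. By Proposition 5.1 this forces $F\simeq\bP_{n_1}\times\ldots\times\bP_{n_k}$, with $\dim F=\sum n_j=n-q$. As $K_{A(X)}=0$ we have $-K_X|_F=-K_{X/A(X)}|_F=-K_F$, which is ample because $F$ is a product of projective spaces; this proves that $-K_X$ is ample along the fibers. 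For part (2) the fibers of $\alpha\circ f:A\to A(X)$ are exactly the sets $G=f^{-1}(F)$, translates of the subtorus $\ker(\alpha\circ f)$, and $R|_G=(f|_G)^*(-K_F)$ is ample since $f|_G:G\to F$ is finite; this is the assertion that $\sO_A(R)=f^*(-K_X)$ is ample along the fibers of $\alpha\circ f$.

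Next I would record that $\Phi=(p,\alpha\circ f)$ has finite fibers. A fiber of $\Phi$ is an intersection $(a+S)\cap(a'+G)$ with $S=\ker p$, hence finite as soon as $S\cap G$ is finite; and since $R=p^{-1}(R_V)$ we have $R|_G=(p|_G)^*R_V$, so the ampleness of $R|_G$ forces $p|_G:G\to V$ to be finite, i.e. $S\cap G$ finite. The rest of the isogeny statement is then a matter of dimensions: $\Phi$ is an isogeny onto $V\times A(X)$ as soon as $\dim V+\dim A(X)=n$, i.e. $\dim V=n-q$. Since Proposition 6.1 already gives $\nu(-K_X)=\dim V$, everything reduces to proving that the numerical dimension of $-K_X$ equals $\dim F=n-q$. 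The inequality $\nu(-K_X)\ge n-q$ is immediate from the ampleness of $-K_X|_F$; the hard part — and the main obstacle — is the reverse inequality $\nu(-K_X)\le n-q$, equivalently $(-K_X)^{n-q+1}\equiv 0$. This cannot be read off on $A$, where $f^*(-K_X)=R=p^*R_V$ visibly has numerical dimension $\dim V$; the whole point is that the Albanese structure, invisible downstairs, is what pins $\dim V$ down to $n-q$.

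To prove $\nu(-K_X)\le n-q$ I would exploit the smooth fibration $\alpha:X\to A(X)$ (Proposition 4.1 (7)) together with the rigidity of its fibers. Since $H^1(F,T_F)=0$ for $F=\bP_{n_1}\times\ldots\times\bP_{n_k}$, the fibration $\alpha$ is locally trivial, and after a finite \'etale base change (killing the finite monodromy permuting isomorphic factors, together with the relevant Brauer class) one may assume $X$ is a fibered product of projectivized bundles $\bP(E_j)$ over $A(X)$; as $\nu$, $\dim$ and $q$ are unchanged by such covers this is harmless. Proposition 4.6 (3) then applies to each factor and shows that $E_j$ is projectively numerically flat, so $E_j'=E_j\otimes\det(E_j)^{-1/(n_j+1)}$ is numerically flat, all its Chern classes vanish, and the Grothendieck relation gives $(\zeta_j')^{n_j+1}=0$ for $\zeta_j'=\sO_{\bP(E_j')}(1)$. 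Hence $-K_{X/A(X)}=\sum_j(n_j+1)\zeta_j'$ satisfies $(-K_{X/A(X)})^{n-q+1}=0$, so $\nu(-K_X)=\nu(-K_{X/A(X)})=\dim F=n-q$. Combined with $\nu(-K_X)=\dim V$ this yields $\dim V=n-q$, and together with the finiteness of the fibers of $\Phi$ established above, $\Phi$ is an isogeny onto $V\times A(X)$.

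Finally, for the claim that $p(f^{-1}f(R))$ is a proper algebraic subset of $V$, I would argue directly from the diagram of Proposition 6.1 rather than from the isogeny. Since $g\circ f=h\circ p$ and $R=p^{-1}(R_V)$, every $r\in R$ satisfies $g(f(r))=h(p(r))\in h(R_V)$, so $f(R)\subseteq g^{-1}(h(R_V))$ and therefore $f^{-1}f(R)\subseteq (g\circ f)^{-1}(h(R_V))=p^{-1}(h^{-1}(h(R_V)))$. As $h$ is finite, $h^{-1}(h(R_V))$ has the same dimension as $R_V$, hence is a proper algebraic subset of $V$; thus $p(f^{-1}f(R))\subseteq h^{-1}(h(R_V))\subsetneq V$, as desired.
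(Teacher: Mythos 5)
Your proposal is correct, and for the one genuinely hard point it takes a route quite different from the paper's. The structural parts coincide: like the paper, you get $F\simeq\bP_{n_1}\times\ldots\times\bP_{n_k}$ from Theorem 4.2 plus Proposition 5.1, and your proof that $\Phi$ is an isogeny (zero-dimensional intersection of fibers of $p$ with fibers of $\alpha\circ f$, then the dimension count $\dim V=\nu(-K_X)=n-q$) is exactly the paper's. The divergence is at the inequality $\nu(-K_X)\le n-q$. The paper disposes of it in a few lines of cohomology: if $\dim V>n-q$, then $H^q(A,f^*(-K_X))=H^q(A,p^*R_V)=0$ by [BL04, 3.4.5], whence $H^{n-q}(A,f^*K_X)=0$ by Serre duality on $A$ ($K_A=0$), and the trace splitting of $K_X\subset f_*f^*K_X$ gives $H^q(X,\sO_X)=0$ --- absurd since $\alpha$ surjective and Hodge symmetry force $h^{0,q}(X)\ne 0$. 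You instead prove the stronger geometric statement that, after \'etale base change, $X$ is a fiber product of projectivizations $\bP(E_j)$ of projectively numerically flat bundles, and then kill $(-K_{X/A(X)})^{n-q+1}$ by the Grothendieck relation with vanishing Chern classes. This is heavier but buys more: it anticipates the product structure that the paper only establishes in Section 7, and by a different mechanism (the paper uses horizontal transport along fibers of the anticanonical map $\tilde g$ and its monodromy, not rigidity of $F$). Two steps you assert deserve justification, though both are true: local triviality of $\alpha$, which follows from $H^1(F,T_F)=0$ together with Fischer--Grauert once one knows \emph{every} fiber is a product of projective spaces (which Theorem 4.2 and Proposition 5.1 do give fiberwise); and the killing of the Brauer obstruction by a finite \'etale cover, which is special to tori --- the class in $H^2(A',\sO^*)$ is $(n_j{+}1)$-torsion, hence lifts to $H^2(A',\sO)$ since $H^3(A',\bZ)$ is torsion-free, and multiplication by $n_j{+}1$ pulls back $H^2$ by $(n_j{+}1)^2$, landing the lift in the image of $H^2(A',\bZ)$; this argument (or an equivalent one) should be spelled out, since over a general base the step would fail. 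Note also that applying Proposition 4.6$\,$(3) to each factor $\bP(E_j)$ requires checking that $\bP(E_j)$ is itself covered by a torus, which holds because the fiber product surjects onto it and Corollary 2.2 applies. Finally, for the last claim your set-theoretic chase $f^{-1}f(R)\subset p^{-1}(h^{-1}(h(R_V)))$ through the commutative diagram is correct and arguably cleaner than the paper's argument via $f_*R\sim_{\bQ}(\deg f)(-K_X)$ and triviality of $f^*f_*R$ on the general fiber of $p$.
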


\begin{proof} (1) Theorem 4.2 implies that the fibers $F$ of
$\alpha:X\to A(X)$ have finite fundamental group; as they are also
covered by tori, Proposition 5.1 shows that
$F\simeq\bP_{n_1}\times\ldots\times\bP_{n_k}$. In particular, $F$ is Fano
and so $-K_F$ is ample. This implies that $-K_X$ is ample along the
fibers~$F$, and therefore the numerical dimension $\nu(-K_X)$ is
at least equal to $\dim F=n-q$. If $\nu(-K_X)=\nu(f^*(-K_X))$ was
strictly larger that $n-q$, we would get on the torus $A$
$$
H^{n-q}(A,f^*(K_X))=H^q(A,f^*(-K_X))=0
$$
(see e.g.\ [BL04, 3.4.5]), hence
$$
H^{n-q}(X,K_X)=H^q(X,\sO_X)=0
$$
by taking the direct image. This is absurd.
\vskip .2cm

\noindent
(2) Since $f$ is finite, we see by (1) that $f^*(-K_X)$
is ample along the fibers of~$\alpha\circ f$. On the other hand, as seen
in (6.1), $f^*(-K_X) = \sO_A(R)$ is semi-ample on $A$ and defines the
morphism $p: A \to V$, hence $\sO_A(R)$ is trivial along the fibers of $p$.
Therefore the fibers of $p$ and those of $\alpha\circ f$ can only have
a $0$-dimensional intersection, and this implies that the map
$(p,\alpha\circ f)$ is finite. Since
$$
\dim V=\nu(-f^*K_X)=\nu(-K_X)=n-q=n-\dim A(X),
$$
we conclude that $(p,\alpha\circ f)$ must be an isogeny.  For the last
statement, we notice that the cycle $f_*(R)$ is $\bQ$-linearly
equivalent to $-K_X$ since $f^*(-K_X) = \sO_X(R).$ Thus
$f^*f_*(R)$ is $\bQ$-linearly equivalent to $f^*(-K_X)$. Hence $f^{-1}f(R)$
cannot meet the general fiber of $p: A \to V.$
\end{proof}

\section{Proof of the main theorem}
\setcounter{lemma}{0}

We are now in a position to give all details of the proof of Theorem 1.1.

\begin{proof}
(1) The K\"ahler property follows from Theorem 3.1.
\vskip .2cm

\noindent
(2) First fix an \'etale cover $\tilde X\to X$ such that $q(\tilde
X)=\tilde q(X)$, and a lifting $\tilde f:\tilde A\to\tilde X$. Then by
Proposition 6.2$\,$(2) we get an isogeny
$\Phi=(\tilde p,\tilde \alpha\circ \tilde f):
\tilde A\to \tilde V\times A(\tilde X)$ such that
$\tilde\alpha:\tilde X\to A(\tilde X)$ is the Albanese map of $\tilde X$,
and $\tilde p:\tilde A\to \tilde V$ induces the
anti-canonical image $\tilde Z$ of $\tilde X$ via a commutative diagram
$$
\xymatrix{
   \tilde A \ar[rr]^{\tilde f} \ar[d]^{\tilde p} & & \tilde X
   \ar[d]^{\tilde g} \ar[rr]^{\tilde\alpha} & & A(\tilde X) & \\
  \tilde V \ar[rr]^{\tilde h} & & \tilde Z.&}
$$
On the other hand, we know from 6.2$\,$(1) that $\tilde\alpha$ is a smooth
(locally trivial) fibration and that the fibers $F$ of
$\tilde\alpha$ are products of projective spaces
$\bP_{n_1}\times\ldots\times\bP_{n_k}$. The inverse images $(\tilde f)^{-1}(F)$
are unions of translates of the subtorus $S\subset \tilde A$ equal to
the connected component of $0$ in $\ker(\tilde\alpha\circ\tilde f):
\tilde A\to A(\tilde X)$, a subtorus which is
isogenous to $\tilde V$ via $\tilde p$. Therefore $\tilde g$ maps the fibers
$F$ onto $\tilde Z$, and the restriction $\tilde g_{|F}$ is finite (since
$\tilde h$ is finite and surjective).

Let $\Sigma\subset\tilde Z$ be the union of $\tilde h(\tilde p(\tilde
f^{-1} \tilde f(R)))$ with the set $\tilde Z_\nonet\subset\tilde Z$
above which $\tilde h$ is not \'etale (this includes of course the set
of singular points of $\tilde Z$).  By Proposition 6.2$\,$(2) and the
finiteness of $\tilde h$ this is a proper algebraic subset of $\tilde Z$.
The above argument implies that $\tilde g$ is unramified on
$F\setminus\tilde g^{-1}(\Sigma)$.

We therefore get a ``horizontal direction'' transverse to the fibers of
$F$ by looking
at the fibers of $\tilde g$, and obtain in this way a monodromy of the
fibration $\tilde\alpha$ in terms of a morphism of
$\bZ^{2q}\simeq\pi_1(A(\tilde X))$ into the permutation group of
the finite set $F\cap \tilde g^{-1}(z)$, $z\in\tilde Z\setminus\Sigma$.
The kernel $\Lambda$ of this monodromy map
is a subgroup of finite index in $\pi_1(A(\tilde X))$, and in this way we get
a commutative diagram
$$
\xymatrix{
   X' \ar[rr]^{\alpha'} \ar[d]^u & & A(X') \ar[d]^v & \\
   \tilde X \ar[rr]^{\tilde\alpha} & & A(\tilde X)&}
$$
where the vertical arrows $u$, $v$ are finite \'etale covers and
$v$ is induced by the inclusion $\Lambda\subset\bZ^{2q}$ of the
fundamental groups. Our construction shows that $\alpha'$ is a trivial
fibration [$\,$at least over $X'\setminus u^{-1}(\tilde g^{-1}(\Sigma))$,
but the finiteness of $\tilde g$ implies that the horizontal transport
$$
F_1\cap\big(X'\setminus u^{-1}(\tilde g^{-1}(\Sigma))\big)
\to F_2\cap\big(X'\setminus u^{-1}(\tilde g^{-1}(\Sigma))\big)
$$
between any two fibers $F_1$, $F_2$ must extend to isomorphisms of the
fibers$\,$]. We conclude that
$$
X'=\bP_{n_1}\times\ldots\times\bP_{n_k}\times A(X')
$$
and that the canonical image $Z'$ of $X'$ is precisely
$\bP_{n_1}\times\ldots\times\bP_{n_k}$. Our arguments also imply that
the canonical image $\tilde Z$ (as well as the canonical image $Z$ of the
original manifold $X$) is a finite ramified quotient of $Z'\;$; in fact
any global section of $-mK_X$ pulls back to a global section of
$-mK_{\tilde X}$ or $-mK_{X'}$, and in this way we get naturally
defined maps $Z'\to\tilde Z\to Z$, which are finite by Proposition 6.1
and by obvious commutative diagrams.
\vskip .2cm

\noindent
(3) also follows directly from what we have proved.
\end{proof}

\begin{example} Let us consider $X'=\bP_2\times E_{2\tau}$ where
$E_\tau=\bC/(\bZ+\bZ\tau)$ is the elliptic curve of periods $(1,\tau)$. We take
$X$ to be the finite \'etale quotient of $X'$ by the involution
$(x,t)\mapsto(\sigma(x),t+\tau)$ where $\sigma$ is the involution of
$\bP^2$ given (say) by $x\mapsto -x$ on the affine chart $\bC^2\subset\bP_2$.
In this way we get a fibration $X\to E_\tau$ which is
a locally trivial $\bP_2$-bundle over $E_\tau$  and which is nothing else
than the Albanese map $\alpha:X\to A(X)$. In this case, the anti-canonical
image $Z$ of $X$ is precisely the (singular) quotient
$\bP_2/\langle\sigma\rangle$, because sections of $-mK_X$
pull-back to sections of
$$
H^0(X',-mK_{X'})\simeq H^0(\bP_2,-m K_{\bP_2})
$$
which are invariant by the involution $\sigma$. This simple example shows that
the following two phenomena can occur even under the
assumption $q(\tilde X)=\tilde q(X)$ (here we simply take
$\tilde X=X$)$\,$:
\begin{enumerate}
\item The Albanese map $\tilde X\to A(\tilde X)$ is a non trivial fibration$\,$;
\item The anti-canonical image of $\tilde X$ is singular (and differs from
the product of projective spaces obtained by taking the anti-canonical image of
a suitable \'etale cover~\hbox{$X'\to \tilde X$}).
\end{enumerate}
Of course, it is also easy to produce an example where we additionally have
\vskip .2cm
$\phantom{|}$\kern2pt(3)\kern6pt $q(X')=q(\tilde X)=\tilde q(X)>0=q(X)$.
\vskip .2cm
\noindent
One can take for instance $X$ to be the quotient of
$X'=\bP_2\times (E_{2\tau})^3$
by the finite group${}\simeq\bZ_2^3$ generated by the involutions
\begin{eqnarray*}
&&g_{0}:(x,t_1,t_2,t_3) \mapsto\textstyle
   (\sigma(x),t_1+\frac{1}{2},t_2+{1\over 2},t_3+{1\over 2}),\\
&&g_1:(x,t_1,t_2,t_3) \mapsto
   (x,+t_1+\tau,-t_2+0,-t_3+0),\\
&&g_2:(x,t_1,t_2,t_3) \mapsto
   (x,-t_1+0,+t_2+\tau,-t_3+\tau),\\
&&g_3:(x,t_1,t_2,t_3) \mapsto
   (x,-t_1+\tau,-t_2+\tau,+t_3+\tau),
\end{eqnarray*}
(here $g_3=g_1\circ g_2=g_2\circ g_1$), and let $\tilde X$ be the quotient
of $X'$ by $\langle g_0\rangle\simeq\bZ_2$.
This produces a $\bP_2$-fibration $X\to M$ over an \'etale quotient
of $(E_{2\tau})^3$ with \hbox{$q(M)=0$}, and all phenomena (1), (2), (3)
occur simultaneously.
\end{example}

\section{Appendix: images of K\"ahler spaces by flat morphisms}

We give here a complete proof of Theorem 3.1 under the following
additional assumption that \emph{$X$ has mild singularities}, in the
sense that $X$ is normal and that every point of $X$ admits a neighborhood
$U$ for which there exists a finite ramified cover $\hat U\to U$ which
is smooth (for example, we can take $X$ to have quotient
singularities). In this case, we present below a drastic
simplification of the rather technical arguments of J.~Varouchas
[Var89], which rely extensively on Barlet's theory [Bar75] of cycle
spaces for arbitrary analytic spaces, see also [Cam81], [CP94].
J.~Varouchas probably knew it, but the following simple proof does not
seem to have been published yet.

\begin{proof} Let $f:Y\to X$ be a proper and surjective holomorphic
between complex spaces and let $\omega$ be a K\"ahler metric on~$Y$.
Assuming that the fibers are equidimensional and of pure dimension $p$,
we consider the direct image current
$$
T=f_*(\omega^{p+1}).
\leqno(8.1)
$$
Then clearly $T$ is a K\"ahler current of type $(1,1)$ over $X$ in
the sense of [DP04]. In fact, if $\alpha$ is a smooth positive definite
form on $X$, we have $f^*\alpha\wedge\omega^p\le C_K\omega^{p+1}$ on the
inverse image $f^{-1}(K)$ of any compact set $K\subset X$, therefore
$$
T=f_*(\omega^{p+1})\ge C_K^{-1}f_*(f^*\alpha\wedge\omega^p)
\ge C_K^{-1}(f_*\omega^p)\,\alpha\quad
\hbox{on $K$},
$$
where $f_*\omega^p$ is a (weakly) $d$-closed $(0,0)$ positive current,
namely a collection of positive constants on each of the irreducible
components of $X$. As explained already in section 3, the main point is
to study the continuity of the local potential $u$ of $T$, since it is
then easy to regularize $T$ to obtain a smooth K\"ahler metric.
\vskip.2cm

\noindent
\emph{Step 1.} Assume first that $X$ is non-singular. In this case
we claim$\,$:

\begin{lemma} If $X$ is non-singular, the $(1,1)$ current $T$ defined
by $(8.1)$ admits continuous local potentials.
\end{lemma}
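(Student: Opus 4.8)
The plan is to argue locally over a coordinate ball $B\subset X$ (legitimate precisely because $X$ is smooth) and to produce an explicit continuous plurisubharmonic $u$ on $B$ with $dd^c u=T$. The first step is to rewrite $\omega^{p+1}$ on $f^{-1}(B)$ as a $dd^c$ of a \emph{globally defined} smooth $(p,p)$-form, up to a smooth error. Since $Y$ is K\"ahler, on a locally finite open cover $\{U_\alpha\}$ of $f^{-1}(B)$ we have local potentials $\omega=dd^c\varphi_\alpha$; choosing a subordinate partition of unity $\{\rho_\alpha\}$ and setting $\Psi=\sum_\alpha\rho_\alpha\varphi_\alpha$, a direct computation gives $dd^c\Psi=\omega+\beta$, where $\beta=\sum_\alpha\big(d\rho_\alpha\wedge d^c\varphi_\alpha+d\varphi_\alpha\wedge d^c\rho_\alpha+\varphi_\alpha\,dd^c\rho_\alpha\big)$ is a globally defined smooth, closed, real $(1,1)$-form on $f^{-1}(B)$ (the fact that each $\varphi_\alpha$ lives only on $U_\alpha$ causes no trouble, since every term is supported where $\rho_\alpha$ is). As $\omega^p$ is $d$-closed, this yields $\omega^{p+1}=dd^c(\Psi\,\omega^p)-\beta\wedge\omega^p$.

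Next I would push this forward, using that $f_*$ commutes with $dd^c$, to get on $B$ the identity $T=dd^c u_1-S$, where $u_1=f_*(\Psi\,\omega^p)$ is a function and $S=f_*(\beta\wedge\omega^p)$ is a closed real $(1,1)$-current. Both objects are fiber integrals of continuous forms along $f$: indeed $u_1(x)=\int_{Y_x}\Psi\,\omega^p$, while testing $S$ against a smooth $(n-1,n-1)$-form $\gamma$ on $B$ and applying Fubini turns $\langle S,\gamma\rangle$ into an integral over $B$ of fiber integrals of $\beta\wedge\omega^p$. Granting the continuity statement below, $u_1$ is then continuous and $S$ has continuous coefficients; a continuous closed real $(1,1)$-form on a ball admits a continuous (indeed $C^{1,\alpha}$) potential $w$ with $dd^c w=S$, by the interior elliptic regularity of the equation $dd^c w=S$. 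Then $u:=u_1-w$ is a continuous potential of $T$, which is exactly the assertion of the lemma.

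Everything therefore reduces to the following core fact, which is where I expect the main obstacle to lie: \emph{for a continuous $(p,p)$-form $\eta$ on $Y$, the fiber integral $x\mapsto\int_{Y_x}\eta$ is continuous on $X$.} Over the open set where $f$ is a submersion this is classical (fiber integration is then even smooth), so the whole difficulty is the behaviour across the discriminant, where fibers degenerate or pick up multiplicities; this is precisely the point at which [Var89] had to invoke Barlet's cycle-space machinery. Here the decisive input is much cheaper: as already recorded in the discussion preceding the lemma, $f_*\omega^p$ is a $d$-closed $(0,0)$-current, hence locally constant, so the fiber masses $\int_{Y_x}\omega^p$ are constant on $B$. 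Combined with properness (for $x\to x_0$ the fibers $Y_x$ stay in shrinking neighborhoods of $Y_{x_0}$, so no mass escapes) and with equidimensionality (no mass drops to lower dimension), this constancy of mass forces the integration currents $[Y_x]$ to converge weakly to $[Y_{x_0}]$; the limit is identified as exactly the fiber cycle by Federer's support theorem together with the structure theorem for positive closed currents supported on an analytic set. Weak convergence against the continuous integrand $\eta$ then yields $\int_{Y_{x_\nu}}\eta\to\int_{Y_{x_0}}\eta$, i.e.\ continuity.

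Thus the hard part is the weak continuity of the fiber cycles $[Y_x]\to[Y_{x_0}]$, and I would organize the write-up so that it rests only on the constancy of the fiber mass (which the $d$-closedness of $f_*\omega^p$ supplies for free) and on soft structure theorems for positive currents; this is exactly what makes the present argument a genuine shortcut relative to the technical apparatus of [Var89], while still being enough to treat the smooth-base case needed in Step~1.
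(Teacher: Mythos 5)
Your reduction is fine as far as it goes: the patching identity $dd^c\Psi=\omega+\beta$ with $\beta$ a smooth closed real $(1,1)$-form, the resulting decomposition $T=dd^c f_*(\Psi\,\omega^p)-f_*(\beta\wedge\omega^p)$, and the elliptic solving of $dd^cw=S$ for continuous $S$ are all standard and correct. The genuine gap is in your proof of the ``core fact''. Compactness of masses plus properness plus the support theorem only give that \emph{every subsequential} weak limit of $[Y_{x_\nu}]$ is of the form $\sum_j\lambda_j[Z_j]$ with $\lambda_j\ge 0$, where $Z_j$ are the $p$-dimensional irreducible components of $Y_{x_0}$ (equidimensionality guarantees there are no components of other dimensions). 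Constancy of the fiber mass then yields exactly \emph{one} scalar equation, $\sum_j\lambda_j\int_{Z_j}\omega^p=c$. When $Y_{x_0}$ is reducible or carries multiplicities, this neither determines the $\lambda_j$, nor shows that different sequences $x_\nu\to x_0$ produce the same limit, nor even defines the ``fiber cycle $[Y_{x_0}]$'' that the limit is supposed to be identified with. Without uniqueness of the limit cycle, $u_1=f_*(\Psi\,\omega^p)$ could a priori have direction-dependent limits along the discriminant, and continuity fails. The statement you actually need --- that a proper equidimensional surjection onto a normal (here smooth) base defines a continuous, indeed analytic, family of fiber cycles --- is true, but it is precisely Barlet's theory of geometrically flat maps, i.e.\ the machinery of [Bar75] on which [Var89] rests and which you announced you were bypassing; your mass-constancy argument silently re-assumes its conclusion. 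So the proposal is not a shortcut: either you invoke Barlet at this point (making the proof correct but no simpler than [Var89]), or the key step is unproven.

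This is also where your route diverges essentially from the paper's. The paper never considers fiber cycles or their limits: it writes the local potential of $T$ explicitly through the kernel $\log|z-\zeta|\,\big(dd^c\log|z-\zeta|\big)^{n-1}$, whose top power reproduces the diagonal, and then changes variables $\zeta=f(t)$ to express $u(z)$ as an integral over $Y$ against $\omega^{p+1}$ and powers of $dd^c\log|z-f(t)|$. Continuity in $z$ is then a pluripotential-theoretic statement (Lemma 8.3): uniform local mass bounds via Stokes and the comparison principle, and uniform integrability of the logarithmic factor via a Lojasiewicz-type estimate, valid because the pole set $\{t:f(t)=z\}$ has codimension $n$ in $Y$ --- this is the only place equidimensionality enters, replacing entirely the continuity-of-cycles input your argument requires. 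If you want to salvage your write-up, the honest options are to cite Barlet's geometric flatness theorem for the core fact, or to switch to the kernel argument.
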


\begin{proof}
By restricting the proper map $f:Y\to X$ over a small neighborhood
of a point $x_0\in X$, we can assume that $X$ is a ball $B(x_0,r)$ in
$\bC^n$. Modulo smooth terms, the local potential $u$ of $T$ is given
by an integral of the form
$$
u(z)=\int_{\zeta\in X}\chi(\zeta)T(\zeta)\wedge\log|z-\zeta|\wedge
dd^c(\log|z-\zeta|)^{n-1},
$$
thanks to the fact that $(dd^c\log|z-\zeta|)^n$ is the current of
integration on the diagonal of $\bC^n\times\bC^n$. Here $\chi$ is a
cut-off function with compact support in~$X$, equal to $1$ on a
neighborhood of~$x_0$, and the integral should be viewed as the direct
image of a current on $\bC^n\times\bC^n$ by the first projection
$(z,\zeta)\mapsto z$. By the definition of $T$ as a direct image,
we can express $u(z)$ as an integral over $Y$, namely a change of
variable $\zeta=f(t)$ yields
$$
u(z)=\int_{t\in Y}\chi(f(t))\omega(t)^{p+1}\wedge\log|z-f(t)|\wedge
dd^c(\log|z-f(t)|)^{n-1}
$$
where $\dim Y=n+p$. The continuity of $u$ follows from
the following more general statement applied to
$v_j(t,z)=\log|z-f(t)|$, thanks  to the fact that all poles of $v_j$ occur
in codimension $n$ on $Y$ by the assumption that $f$ has equidimensional
fibers.
\end{proof}

\begin{lemma} Let $Y,\,Z$ be complex spaces. Consider plurisubharmonic functions
on $Y\times Z$ of the form
$$
v_j(t,z)=\log\sum_k|f_{j,k}(t,z)|^2
$$
where the $f_{j,k}$ are holomorphic functions such that the poles
of the functions\break
$t\mapsto v_j(t,z)$ are at least of codimension $n$
everywhere on $Y$, for every $z\in Z$. Then the wedge products
$$
v_1(\ssbt,z)dd^cv_2(\ssbt,z)\wedge\ldots\wedge dd^c v_\ell(\ssbt,z),
\quad
dd^cv_1(\ssbt,z)\wedge\ldots\wedge dd^c v_\ell(\ssbt,z)
$$
are well-defined currents of locally finite mass whenever $\ell\le n$, and
they depend continuously on $z$ in the weak topology.
\end{lemma}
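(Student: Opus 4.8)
The plan is to establish both families of currents simultaneously by induction on $\ell$; since the statement is local on $Y$ and on $Z$, I fix relatively compact open subsets and a compact parameter set $L\subset Z$. Each $v_j$ is plurisubharmonic on $Y\times Z$, and by hypothesis its polar set $A_j=\{v_j=-\infty\}$ meets every slice $Y\times\{z\}$ in an analytic subset of codimension $\ge n$ in $Y$. The two products are tied together by $dd^c\big(v_1(\ssbt,z)\,dd^cv_2(\ssbt,z)\wedge\ldots\wedge dd^cv_\ell(\ssbt,z)\big)=dd^cv_1(\ssbt,z)\wedge\ldots\wedge dd^cv_\ell(\ssbt,z)$, so it is enough to construct the coefficient current $v_1(\ssbt,z)\,S_{\ell-1}(z)$, where $S_{\ell-1}(z)=dd^cv_2(\ssbt,z)\wedge\ldots\wedge dd^cv_\ell(\ssbt,z)$ is the order-$(\ell-1)$ current supplied by the induction, and then apply $dd^c$. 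Everything then reduces to a single analytic input: a Chern--Levine--Nirenberg mass bound that is \emph{uniform in the parameter} $z\in L$.

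First I would settle well-definedness and local finiteness of mass for each fixed $z$. Since any intersection of $m\le\ell$ of the slice polar sets is contained in each of them, it has codimension $\ge n\ge m$ in $Y$; thus the hypotheses of Demailly's theory of Monge--Amp\`ere products with small unbounded locus are met, so $S_{\ell-1}(z)$ and $dd^cv_1(\ssbt,z)\wedge S_{\ell-1}(z)$ are well-defined closed positive currents of locally finite mass, with $v_1(\ssbt,z)$ locally integrable against $S_{\ell-1}(z)$. The restriction $\ell\le n$ is precisely what guarantees that at the final step the polar set of the $\ell$-th factor still meets the support of the preceding product in codimension $\ge\ell$, so that the inductive construction does not break down.

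The heart of the matter is the continuity in $z$. Here I would introduce the truncations $v_j^{N}=\max(v_j,-N)$, which are bounded continuous plurisubharmonic functions on $Y\times Z$ decreasing to $v_j$. For a fixed level $N$, the functions $v_j^{N}(\ssbt,z)$ are jointly continuous, hence converge locally uniformly in $t$ as $z'\to z$, so by the Bedford--Taylor continuity theorem the bounded products $v_1^{N}(\ssbt,z)\,dd^cv_2^{N}(\ssbt,z)\wedge\ldots$ and $dd^cv_1^{N}(\ssbt,z)\wedge\ldots\wedge dd^cv_\ell^{N}(\ssbt,z)$ depend weakly continuously on $z$. It then remains to prove that, paired with any fixed test form, these truncated products converge to the true products \emph{uniformly in} $z\in L$ as $N\to\infty$. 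The defect is carried by the shrinking neighborhoods $\{v_j<-N\}$ of the polar sets, and after integration by parts its mass on a fixed compact set is dominated by a Chern--Levine--Nirenberg expression in the remaining factors.

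\emph{The main obstacle} is to make this truncation estimate uniform in $z$, since the polar set $A_j$ moves with the parameter. I would resolve this by running the Chern--Levine--Nirenberg inequalities on the total space $Y\times Z$ rather than slice by slice: there the $v_j$ are globally plurisubharmonic, the slice-codimension hypothesis pins down the size of $A_j$, and the resulting bounds involve only the sup-norms of the $v_j^{N}$ and the fixed ambient geometry, hence are independent of $z\in L$. Granting the uniform bound, a routine $\varepsilon/3$ argument---weak continuity at each level $N$ together with convergence in $N$ uniform over $L$---yields the asserted weak continuity of both currents and closes the induction. (In the intended application one has $v_j(t,z)=\log|z-f(t)|$, so $A_j=\{z=f(t)\}$ has slice-codimension exactly $n$ because $f$ is equidimensional, which is exactly why the codimension hypothesis holds.)
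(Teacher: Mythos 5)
Your architecture---fixed-$z$ well-definedness from the theory of Monge--Amp\`ere products with small unbounded locus, truncations $v_j^N=\max(v_j,-N)$ with Bedford--Taylor continuity at each fixed level $N$, and an $\varepsilon/3$ scheme hinging on truncation errors that vanish \emph{uniformly in} $z\in L$---correctly isolates the crux; indeed the paper's proof says explicitly that ``the main point is to get uniform bounds on the mass and uniform integrability with respect to the parameter $z$.'' But the step you yourself flag as the main obstacle is exactly where your argument has a genuine gap, and the mechanism you propose cannot close it. Running Chern--Levine--Nirenberg on the total space $Y\times Z$ fails for two concrete reasons. First, CLN estimates bound masses in terms of the sup-norms of the factors, and $\sup|v_j^N|\sim N$, so such bounds grow like a power of $N$; what you need instead is a quantitative \emph{smallness} statement, namely that the mass of the slice currents carried by the shrinking sets $\{v_j(\ssbt,z)<-N\}$ tends to $0$ uniformly as $N\to\infty$ while the polar sets $P_j(z)$ move. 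Second, the currents in the lemma involve $dd^c$ only in the $t$-variables, whereas CLN on $Y\times Z$ controls products of $dd^c_{t,z}$; passing from total-space mass bounds to bounds on the slice currents is a slicing/Fubini argument that yields information for \emph{almost every} $z$ only, while weak continuity is an assertion at \emph{every} $z$. So ``fixed ambient geometry plus sup-norms of the $v_j^N$'' does not produce the uniform estimate, and your induction does not close.

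What actually closes it in the paper---and what your proof never uses---is the analytic structure $v_j=\log\sum_k|f_{j,k}(t,z)|^2$. The paper first reduces to smooth $Y$ by a generic projection (direct image argument) and uses a slicing trick, absorbing some of the $t$-coordinates into the parameters, to reach the case where the poles $P_j(z)$ are isolated points; a Stokes-theorem/comparison-principle argument then gives the uniform ball-mass bound $\Vert dd^cv_2(\ssbt,z)\wedge\ldots\wedge dd^cv_\ell(\ssbt,z)\Vert_{B(t,r)}\le Cr^2$ with $C$ locally uniform in $z$, and the uniform integrability of the unbounded factor comes from the Lojasiewicz-type estimate $|v_j(t,z)|\le C|\log d(t,P_j(z))|$, whose constants are locally uniform in $z$ precisely because the $f_{j,k}$ are holomorphic in $(t,z)$. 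Note that your argument consumes only plurisubharmonicity plus the slice-codimension hypothesis, and for general psh functions with small polar sets the parameter-continuity conclusion is not obtainable by such soft means: mass can a priori concentrate near the moving polar set at special parameter values (Lelong-number-type quantities are only semicontinuous). To repair your proof you would have to reinstate these quantitative inputs---the uniform $Cr^2$ local mass bound and the uniform logarithmic-pole estimate, or equivalents derived from the holomorphic dependence of the $f_{j,k}$ on $z$---at which point your truncation-and-$\varepsilon/3$ scheme does run and essentially reproduces the paper's argument.
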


\begin{proof} Such statements have been known for a long time, see e.g.\
[Dem93, \S$\,$3]. The main point is to get uniform  bounds on the mass
and uniform integrability with respect to the parameter~$z$. Since the result
is local on $Y$, we can assume that $Y$ is a germ of complex space and
use a direct image argument to reduce ourselves to the case of a smooth
variety $Y\,$: just project by a suitable generic projection
$$
\bC^N=\bC^s\times\bC^{N-s}\to \bC^s,\quad s=\dim Y,
$$
from an open set $\Omega\subset\bC^N$ which is a smooth ambient space
for~$Y$. One can use a slicing
argument to reach the situation where the
poles are just isolated points in~$Y$, for every~$z\in Z$ (this actually
amounts to use certain of the coordinates in $Y$ as new parameters). A suitable
application of Stokes theorem and of the comparison principle is
enough to obtain a uniform bound
$$
\Vert dd^cv_2(\ssbt,z)\wedge\ldots\wedge dd^c v_\ell(\ssbt,z)\Vert_{B(t,r)}\le
Cr^2
$$
for the mass on small balls of radius $0<r\le r_0$ (alternatively, this is
a standard estimate
on the Lelong projective masses $\nu(\ssbt, r)$ of our currents -- since
they are not of maximum degree, they are of dimension at least $1$).
The uniform integrability of
$$
v_1(\ssbt,z)dd^cv_2(\ssbt,z)\wedge\ldots\wedge dd^c v_\ell(\ssbt,z)
$$
is finally obtained from an obvious Lojaziewicz type estimate
$$
|v_j(t,z)|\le C|\log d(t,P_j(z))|,
$$
where $P_j(z)\subset Y$ denotes the set of poles of $t\mapsto v_j(t,z)\,$; all
constants $C$ described here can be taken to be locally uniform in $z$.
\end{proof}

\noindent
\emph{Step 2.} Assume now that $X$ is a general normal complex space of 
pure dimension
with \emph{mild singula\-rities}. The main point is to prove the existence and
the continuity of the potential of~$T$. Since this is a local question
on $X$, we may assume
that $X$ admits an irreducible finite ramified covering $\hat X\to X$
such that $\hat X$ is smooth. By taking the fiber product with
$f:Y\to X$, we get a commutative diagram
$$
\xymatrix{
   \hat Y \ar[rr]^{\hat f} \ar[d]^u & & \hat X \ar[d]^v & \\
   Y \ar[rr]^{f} & & X,&}
$$
where  the vertical arrows $u$, $v$ are finite and the horizontal arrows
$f$, $\hat f$ are equidimensional. The fact that $X$ is normal implies
moreover that $\hat Y$ is of pure dimension.
Step 1 shows that the current
$$
\hat T=(\hat f)_*(u^*\omega^{p+1})=v^*(f_*\omega^{p+1})=v^*T
$$
has a continuous potential. Therefore, if $\delta$ is the ramification degree
of $v$, the finite direct image $T= \frac{1}{\delta}v_*v^*T=
\frac{1}{\delta}v_*\hat T$ also has a continuous potential by the
arguments explained in \S$\,$3.
We finally conclude that $T$ can be regularized as a K\"ahler metric
by Richberg's theorem [Ric68] (cf.\ [Dem82, 92]).
\end{proof}

\begin{remark} Without a suitable assumption on the singularities, it
is unclear whether the current $T=f_*(\omega^{p+1})$ admits a local potential,
and even if this potential exists, it need not be continuous. We can take for
instance $Y=\bP^n$, $n\ge 3$, and $f:Y\to X$ equal to the quotient of
$\bP^n$ obtained by identifying two disjoint isomorphic smooth curves
$C_1$, $C_2$ of different degrees, e.g.\ a line and a conic, through a
given isomorphism $C_1\to C_2$. Then
clearly $X$ cannot be K\"ahler since the pull-back of any smooth
closed $(1,1)$-form $\gamma$ on $X$ must have trivial cohomology
class on
$Y$ (the restrictions of $f^*\gamma$ to $C_1$ and $C_2$ are equal but
at the same
time the degrees $f^*\gamma\cdot C_1$ and $f^*\gamma\cdot C_2$ differ
if $f^*\gamma\not\equiv 0$)$\,$; also, in this case, the push forward
$f_*\omega$ of the Fubini-Study K\"ahler form $\omega$
by the quotient map $f:Y\to X$ has a potential which is merely
defined outside $f(C_1)=f(C_2)$ and does not extend continuously
to $X$. As a consequence, the assumption that $X$ is normal is hard
to avoid.

\end{remark}


\begin{thebibliography}{Mum69}

\bibitem[Bar75]{Bar75} Barlet, D.: \emph{Espace analytique r\'eduit des cycles
analytiques complexes compacts d'un espace analytique complexe de
dimension finie},
S\'eminaire F.~Norguet: Fonctions de plusieurs variables complexes,
1974/75,
Lecture Notes in Math., vol.~482,
Springer, Berlin Heidelberg New York,
1975, pp.~1--158.

\bibitem[BL04]{BL04} Birkenhake, C.; Lange, H.:
Complex abelian varieties. 2nd augmented ed.
Grundlehren der Mathematischen Wissenschaften 302. Berlin: Springer, 2004

\bibitem[BDPP04]{BDPP04} Boucksom, S., Demailly, J.-P., P\v{a}un, M.,
Peternell, Th.: \emph{The pseudo-effective cone of a compact K\"ahler
manifold and varieties of negative Kodaira dimension}, math.AG/0405285.

\bibitem[Cam81]{Cam81} Campana, F.: \emph{Cor\'eduction alg\'ebrique d'un espace analytique faiblement k\"ahl\'erien compact}. Invent.\ Math.\  {\bf 63}  (1981), no. 2, 187--223

\bibitem[CP94]{CP94}Campana, F.; Peternell, Th.: \emph{Cycle spaces}. Several complex variables, VII,
319--349, Encyclopaedia Math. Sci., {\bf 74}, Springer, Berlin, 1994

\bibitem[Deb89]{Deb89} Debarre, O.: \emph{Images lisses d'une vari\'et\'e ab\'elienne simple}. C.R.\ Acad.\ Sci.\ Paris {\bf 309} (1989), 119--122

\bibitem[Dem82]{Dem82} Demailly, J.-P.: \emph{Estimations $L^2$ pour
l'op\'erateur $\overline\partial$ d'un fibr\'e
vectoriel holomorphe semi-positif au-dessus d'une vari\'et\'e k\"ahl\'erienne
compl\`ete}. Ann.\ Sci.\ \'Ecole Norm.\ Sup.\ 4e S\'er.\ {\bf 15} (1982)
457--511

\bibitem[Dem92]{Dem92} Demailly, J.-P.: \emph{Regularization of closed
positive currents and intersection theory}. J.\ Algebraic Geometry,
{\bf 1} (1992), 361--409.

\bibitem[Dem93]{Dem93} Demailly, J.-P.:
\emph{Monge-Amp\`ere operators, Lelong numbers and intersection
theory}, Complex Analysis and Geometry, Univ.\ Series in Math.,
edited by V.~Ancona and A.~Silva, Plenum Press, New-York, 1993, 115--193.


\bibitem[DP04]{DP04} Demailly, J.-P.; P\v{a}un, M.: \emph{
Numerical characterization of the K\"ahler cone of a compact K\"ahler manifold}.
Ann.\ Math.\ {\bf 159}, No. 3, (2004), 1247--1274

\bibitem[DPS94]{DPS94} Demailly, J.-P.; Peternell, Th., Schneider, M.: \emph{
Compact complex manifolds with numerically effective tangent bundles}, J.\
Algebraic Geometry {\bf 3} (1994) 295--345

\bibitem[HM01]{HM01} Hwang, J.M.; Mok, N.: \emph{Projective manifolds dominated by abelian varieties}. Math.\ Z.\ {\bf 238} (2001), 89--100

\bibitem[Kaw85]{Kaw85} Kawamata, Y.:  \emph{Pluricanonical systems on minimal algebraic varieties}. Invent.\ Math.\ {\bf 79} (1985), 567--588

\bibitem[Moi67]{Moi67} Moishezon, B. G.: \emph{On $n$-dimensional
compact varieties with $n$ algebraically indepedent meromorphic
functions}. Am. Math. Soc. Transl. II. Ser. 63, (1967) 51-177

\bibitem[Nak87]{Nak87} Nakayama, N.:  \emph{The lower semi-continuity of the plurigenera of complex varieties}. Adv.\ Stud.\ Pure Math.\ {\bf 10} (1987), 551--590

\bibitem[OSS80]{OSS80} Okonek, Ch., Schneider, M., Spindler, H.: \emph{Vector
bundles on complex projective spaces}, Progress in Mathematics, Vol.~3,
Birkh\"auser Boston-Basel-Stuttgart, 1980

\bibitem[Pau98]{Pau98} P\v{a}un, M.: \emph{Sur l'effectivit\'e num\'erique des images inverses de fibr\'es en droites}. Math.\ Ann.\ {\bf 310} (1998), 411--421

\bibitem[Ric68]{Ri68} Richberg, R.: \emph{Stetige streng pseudokonvexe
    Funktionen}.  Math. Ann.\ {\bf 175} (1968) 257--286

\bibitem[Uen75]{Uen75} Ueno, K.: \emph{Classification theory of algebraic
varieties and compact complex spaces}. Lecture Notes in Math.\ {\bf 439},
Springer 1975

\bibitem[Var84]{Var84} Varouchas, J.: \emph{Stabilit\'e de la classe des vari\'et\'es k\"ahl\'eriennes par certains morphismes propres}. Invent.\ Math.\ {\bf 77} (1984),  no. 1, 117--127.

\bibitem[Var89]{Var89} Varouchas, J.: \emph{K\"ahler spaces and proper open morphisms}. Math.\ Ann.\ {\bf 283} (1989), 13--52.

\bigskip
$~$\kern-34pt
(Version of February 22, 2008)

\end{thebibliography}
\end{document}